\title{Introduction to inverse problems for hyperbolic PDEs}
\declaretheorem{theorem}\declaretheorem{lemma}\declaretheorem{corollary}}
\newcommand{\HOX}[1]{\todo[noline,color=white,size=\footnotesize]{#1}}
\def\p{\partial}
\def\R{\mathbb R}
\DeclareMathOperator{\supp}{supp}
\DeclarePairedDelimiter\norm{\lVert}{\rVert}
\author[]{Medet Nursultanov}
\address {Department of Mathematics and Statistics, University of Helsinki, Helsinki, and Institute of Mathematics and Mathematical Modeling, Almaty, Kazakhstan}
\email{medet.nursultanov@gmail.com}
\author[]{Lauri Oksanen}
\address {Department of Mathematics and Statistics, University of Helsinki, Helsinki, Finland}
\email{lauri.oksanen@helsinki.fi}
\begin{document}\maketitle

\tableofcontents

These lecture notes were written for CIRM SMF School {\em Spectral Theory, Control and Inverse Problems}, November 2022.

\section{Introduction}

There are two main approaches to solve inverse {\em coefficient determination} problems for wave equations: the Boundary Control method and an approach based on geometric optics. 
These notes focus on the Boundary Control method, but we will have a brief look at the geometric optics as well.

\subsection{Boundary Control method}

The Boundary Control method can be used to solve several coefficient determination problems for wave equations.
The method originates from \cite{Belishev87}, and it allows for building a rather comprehensive theory of inverse problems of coefficient determination type. Indeed, many such problems
can be reduced to inverse boundary spectral problems, 
which again can be reduced to coefficient determination problems for wave equations. This is the case for inverse problems for heat and non-stationary Schro\"odinger equations \cite{Mandache04},
as well as for several time-fractional \cite{Kian18} and space-fractional \cite{Feizmohammadi} equations. Inverse problems for linear elliptic equations on a wave guide \cite{DSFerreira16}, and some (very special) non-linear elliptic equations \cite{Carstea} are also covered by this theory. 

The method has been generalized for symmetric systems of wave equations with scalar leading order \cite{Kurylev18} and for wave equations on Lorentzian manifolds satisfying a curvature bound \cite{Alexakis22}.

We mention that the Boundary Control method has been extended to certain non-smooth settings. For instance, see \cite{KianMoranceyOksanen,LiuSaksalaYan} for non-smooth potentials and \cite{AndersonKatsudaKurylev,IsozakiKurylevLassas2015,IsozakiKurylevLassas2017,KirpichnikovaKurylev} for non-smooth geometries.

In these notes we give an introductory exposition of the method. For an in-depth presentation, we refer to the monograph \cite{Matti}. 

\subsection{Geometric optics based approach}

Geometric optics constructions are well-suited for solving inverse problems with time-dependent coefficients. In the case of time-independent coefficients, this approach has not reached the same geometric generality as the Boundary Control method.  

The geometric optics based approach goes back to \cite{StefanovZ}. It has been generalized for wave equations on real analytic Lorentzian manifolds satisfying a convex foliation condition \cite{SY, Stefanov1}, as well as for certain static and stationary  Lorentzian manifolds \cite{FIKO,FIO}.  We also mention work \cite{SylvesterUhlmann} with the precursor \cite{Calderon}, where complex geometric optics used to solve analogous inverse problems for elliptic partial differential equations. 

At the moment it is not clear which of the two methods works better in the Lorentzian context. This context is natural since it respects the idea that coordinates do not exist a priori in nature, and hence should play no role in the mathematical formulation of physical models.

\section{Boundary Control method in $1+1$ dimensions}

Consider the following  initial-boundary value problem
\begin{equation}\label{main eq lec 2}
	\begin{cases}
		(\partial_t^2 - \partial_x^2 + q) u = 0; & \text{for } (t,x) \in \mathbb{R}^+\times (0,1);\\
		\left. u\right|_{x=0} = f;\\
		\left. u\right|_{x=1} = 0;\\
		\left. u\right|_{t=0} = \left. \partial_t u\right|_{t=0} = 0,
	\end{cases}
\end{equation}
where $f$ is a boundary source, $q$ is a potential, and $\R^+$ stands for the interval $(0, \infty)$. 
For simplicity, we will assume that $q \in C^\infty(0,1)$, a smooth function up to the end points of the interval $[0,1]$, and that $f \in C_0^\infty(\R^+)$, a smooth compactly supported function. Then \eqref{main eq lec 2} has a unique solution that satisfies $u \in C^\infty(\R^+ \times (0,1))$, see e.g. the theory in Section 7.2 of \cite{Evans1998}.

We write $u^f=u$ for the solution to \eqref{main eq lec 2} to emphasize its dependence on the source $f$. The Dirichlet-to-Neumann map is a natural choice of data for inverse problems as it directly relates boundary inputs to boundary outputs. Therefore, we consider the inverse problem with the Dirichlet-to-Neumann map as data, defined as follows:
\begin{equation}\label{DtN}
	\Lambda f = \left. \partial_x u^f \right|_{x=0}.
\end{equation}
We will study the following inverse problem: determine the potential $q$ given the operator $\Lambda$.

\subsection{Finite speed of propagation in the case $q = 0$}

The finite speed of propagation property for the acoustic wave equation plays a central role in both direct and inverse theory. 
To illustrate the property in the simplest possible case, let us 
consider \eqref{main eq lec 2} with $q = 0$.
We set
\begin{equation*}
	\mathcal{E}(t,x)  = |\partial_t u(t,x)|^2 + |\partial_x u(t,x)|^2.
\end{equation*}

\begin{theorem}[Conservation of energy]
	Suppose that $u \in C^2(\overline{{\R}^+\times (0,1)})$ satisfies
	\begin{equation*}
		\begin{cases}
			(\partial_t^2 - \partial_x^2) u = 0 \quad \text{on } \mathbb{R}^+\times (0,1)\\
			\left.u\right|_{x=0} = 
\left.u\right|_{x=1} = 0.
		\end{cases}
	\end{equation*}
	Then the global energy
	\begin{align*}
		E(t) = \frac{1}{2} \int_{0}^{1} \mathcal{E}(t,x) dx
	\end{align*}
	satisfies $\partial_t E(t) = 0$ for all $t > 0$.
\end{theorem}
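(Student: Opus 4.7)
The plan is a standard energy identity argument: differentiate $E(t)$ under the integral sign, use the equation to convert time derivatives into spatial derivatives, and then integrate by parts so that the Dirichlet boundary conditions kill the boundary terms.

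In more detail, I would first note that the assumed regularity $u \in C^2(\overline{\R^+\times(0,1)})$ makes differentiation under the integral sign legitimate, yielding
\begin{equation*}
\partial_t E(t) = \int_0^1 \bigl( \partial_t u\, \partial_t^2 u + \partial_x u\, \partial_t \partial_x u \bigr)\, dx.
\end{equation*}
Next, I would integrate the second term by parts in the variable $x$ to obtain
\begin{equation*}
\int_0^1 \partial_x u\, \partial_t \partial_x u\, dx = \bigl[ \partial_x u\, \partial_t u \bigr]_{x=0}^{x=1} - \int_0^1 \partial_x^2 u\, \partial_t u\, dx.
\end{equation*}

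The boundary contributions vanish because the Dirichlet conditions $u(t,0)=u(t,1)=0$ hold for all $t>0$, so differentiating in $t$ gives $\partial_t u(t,0)=\partial_t u(t,1)=0$. Substituting back and collecting terms,
\begin{equation*}
\partial_t E(t) = \int_0^1 \partial_t u \,(\partial_t^2 u - \partial_x^2 u)\, dx = 0,
\end{equation*}
by the wave equation, and this concludes the argument.

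The computation is essentially mechanical; the only subtle point is the justification that $\partial_t u$ vanishes at $x=0$ and $x=1$. This follows from the compatibility of differentiating a boundary identity in a variable transverse to the boundary trace, which is allowed by the $C^2$ hypothesis on the closure. No genuine obstacle is expected.
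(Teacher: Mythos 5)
Your argument is correct and follows essentially the same route as the paper's proof: differentiate under the integral, integrate by parts in $x$, kill the boundary terms via $\partial_t u(t,0)=\partial_t u(t,1)=0$ (obtained by differentiating the Dirichlet conditions in $t$), and invoke the wave equation. Nothing further is needed.
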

\begin{proof}
	We write,
	\begin{multline*}
		\partial_t E(t) =  \frac{1}{2} \int_{0}^{1} \partial_t \mathcal{E} (t,x)dx \\
		=  \int_{0}^{1}\left(  \partial_t u(t,x) \partial_t^2 u(t,x) + \partial_x u(t,x) \partial_{t}\partial_{x} u(t,x)\right) dx.
	\end{multline*}
	By integration by parts, we obtain
	\begin{equation*}
		\partial_t E(t) =  \left[\partial_x u(t,x)\partial_t u(t,x)\right]^{1}_{0}
		+ \int_{0}^{1} \left( \partial_t^2u(t,x) - \partial_x^2u(t,x)\right) \partial_t u(t,x) dx.
	\end{equation*}
	The first term is zero since $u(t,0) = u(t,1) = 0$, and the second term is zero since $u$ satisfies the wave equation.
\end{proof}

\begin{theorem}[Finite speed of propagation]
	Suppose that $u \in C^2(\overline{{\R}^+\times (0,1)}) $ satisfies
	\begin{equation}\label{eq_wave_simple}
		\begin{cases}
			(\partial_t^2 - \partial_x^2) u = 0 \quad \text{on } \mathbb{R}^+\times (0,1)\\
			\left.u\right|_{x=1} = 0.
		\end{cases}
	\end{equation}
	Then the local energy
	\begin{align*}
		E(t) = \frac{1}{2} \int_t^1 \mathcal{E}(t,x) dx 
	\end{align*}
	satisfies $\partial_t E(t) \le 0$ for all $t > 0$.
\end{theorem}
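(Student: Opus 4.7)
The plan is to imitate the conservation-of-energy proof, but to keep careful track of the extra boundary contribution coming from the moving endpoint $x=t$ in the definition of the local energy. Concretely, I would differentiate $E(t) = \tfrac{1}{2}\int_t^1 \mathcal{E}(t,x)\,dx$ using Leibniz's rule, which produces two terms: a boundary term $-\tfrac{1}{2}\mathcal{E}(t,t)$ coming from the lower limit, and the integral $\tfrac{1}{2}\int_t^1 \partial_t\mathcal{E}(t,x)\,dx$ of the integrand's time derivative.

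Next I would treat the second term exactly as in the previous theorem. Expanding $\partial_t\mathcal{E}$ and integrating by parts in $x$ over $[t,1]$ yields the bulk expression $\int_t^1(\partial_t^2 u - \partial_x^2 u)\partial_t u\,dx$, which vanishes by the wave equation, together with the boundary contribution $[\partial_x u\,\partial_t u]_t^1$. At the right endpoint $x=1$, the Dirichlet condition $u|_{x=1}=0$ forces $\partial_t u(t,1)=0$, so only the lower endpoint survives and contributes $-\partial_x u(t,t)\,\partial_t u(t,t)$.

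Collecting everything, $\partial_t E(t)$ equals
\begin{equation*}
	-\tfrac{1}{2}\bigl(|\partial_t u(t,t)|^2 + |\partial_x u(t,t)|^2\bigr) - \partial_x u(t,t)\,\partial_t u(t,t) = -\tfrac{1}{2}\bigl(\partial_t u(t,t) + \partial_x u(t,t)\bigr)^2,
\end{equation*}
which is manifestly nonpositive. The only conceptual step is recognizing that the Leibniz boundary term and the integration-by-parts boundary term at $x=t$ combine into a perfect square; once this algebraic identity is noticed, the inequality $\partial_t E(t)\le 0$ follows immediately, and no genuine obstacle remains. The absence of a Dirichlet condition at $x=0$ is harmless precisely because the domain of integration is pulled away from $x=0$ at speed one, matching the characteristic speed of the wave operator.
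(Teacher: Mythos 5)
Your proof is correct and follows essentially the same route as the paper: Leibniz's rule for the moving lower limit, integration by parts, using the Dirichlet condition to kill the boundary term at $x=1$, and combining the remaining terms at $x=t$ into the perfect square $-\tfrac{1}{2}\bigl(\partial_t u(t,t)+\partial_x u(t,t)\bigr)^2 \le 0$.
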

\begin{proof}
	We write, by using Leibniz integral rule,
	\begin{align*}
		\partial_t E(t) &= - \frac{1}{2}  \mathcal{E}(t, t) + \frac{1}{2} \int_{t}^{1} \partial_t \mathcal{E} (t,x)dx \\
		& = - \frac{1}{2}  \mathcal{E}(t, t) + \int_{t}^{1}\left(  \partial_t u(t,x) \partial_t^2 u(t,x) + \partial_x u(t,x) \partial_{t}\partial_{x} u(t,x)\right) dx.
	\end{align*}
	An integration by parts gives
	\begin{multline*}
		\partial_t E(t) = - \frac{1}{2}  \mathcal{E}(t, t) + \left[\partial_x u(t,x)\partial_t u(t,x)\right]^{1}_{t}\\
		+ \int_{t}^{1} \left( \partial_t^2u(t,x) - \partial_x^2u(t,x)\right) \partial_t u(t,x) dx.
	\end{multline*}
	Since $u$ is the solution of the wave equation the last integral is $0$. Moreover, since $u(t,1) = 0$, the second term is $0$ at $1$, so that
	\begin{equation*}
		\partial_t E(t) = - \frac{1}{2}  \mathcal{E}(t, t) - \partial_tu(t,t)\partial_x u(t,t) = - \frac{1}{2}\left(\partial_tu(t,t) + \partial_x u(t,t)\right)^2\leq 0.
	\end{equation*}
\end{proof}

In particular, if $E(0) = 0$ then $E(t) = 0$ for all $t > 0$.
So if $u$ and $\partial_t u$ vanish initially on $(0,1)$ then at time $t$ they vanish for $x \in (t,1)$.
To see that this statement is optimal, observe that 
	\begin{align*}
u(t, x) = f(t - x) - f(t + x - 2)
	\end{align*}
is a solution to \eqref{eq_wave_simple} for any $f \in C^2(\R)$. Moreover, if $f(r)$ vanishes for $r < 0$, then $E(0) = 0$. 
But $u$ does not vanish near the line $t = x$ if $f(r)$ does not vanish for $r > 0$ near $r=0$.

\subsection{Finite speed of propagation for general $q$}

Let us now consider a wave equation with a potential as in \eqref{main eq lec 2}, and prove a finite speed of propagation result in the very natural setting of a diamond, see the set $K$ in Theorem~\ref{th_fsp_1d} below. 
The set $K$ is visualized in Figure \ref{1d}.

It is also possible to study the case where boundary conditions are posed, say at $x=0$ and $x=1$ as before, and where the diamond intersects the boundary of $\R^+ \times (0,1)$. However, we leave this case to the reader.

\begin{theorem}[Finite speed of propagation]\label{th_fsp_1d}
	Let $X>0$ and define
	\begin{equation*}
		K = \{(t,x)\in \mathbb{R}\times\mathbb{R}: X- |x|\geq |t|\}.
	\end{equation*}
Let $q \in L^\infty(K)$ and let $u \in C^2(K)$ be a solution of the equation
	\begin{equation*}
		\begin{cases}
			\left(\partial_t^2 - \partial_x^2 + q(t,x)\right) u = 0, & \text{on } K;\\
			\left.u\right|_{t=0} = \left.\partial_t u\right|_{t=0} = 0, & \text{on } (-X,X).
		\end{cases}
	\end{equation*}
Then $u = 0$ in $K$.
\end{theorem}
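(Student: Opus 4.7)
The plan is to run an energy estimate on a shrinking diamond, with an extra $|u|^2$ term included so that the zeroth-order perturbation $q u$ can be absorbed via Grönwall. By symmetry under $t \mapsto -t$ it suffices to treat $t \geq 0$: if $v(t,x) = u(-t,x)$, then $v$ satisfies the same type of equation with potential $\tilde q(t,x) = q(-t,x)$ and with vanishing Cauchy data on $(-X,X)$, so a proof for $t \geq 0$ automatically covers $t \leq 0$. Thus I focus on $0 \leq t \leq X$ and define
\begin{equation*}
	E(t) = \frac{1}{2}\int_{-(X-t)}^{X-t} \bigl(|\partial_t u(t,x)|^2 + |\partial_x u(t,x)|^2 + |u(t,x)|^2\bigr)\, dx.
\end{equation*}
The initial conditions give $E(0) = 0$, and the goal is to derive a differential inequality $\partial_t E(t) \leq C E(t)$ for some constant $C$ depending on $\|q\|_{L^\infty(K)}$.

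Next I would differentiate $E(t)$ using Leibniz's rule. The endpoints $x = \pm(X-t)$ move inward at speed one, producing boundary contributions $-\tfrac12\bigl(|\partial_t u|^2 + |\partial_x u|^2 + |u|^2\bigr)$ at each of the two tips. Inside the integral I would integrate the term $\partial_x u\,\partial_t\partial_x u$ by parts in $x$, pick up an additional boundary contribution $[\partial_x u\,\partial_t u]_{-(X-t)}^{X-t}$, and use the equation $\partial_t^2 u = \partial_x^2 u - q u$ to replace the remaining volume term by $-\int q u\,\partial_t u\, dx$ (plus the routine $\int u\,\partial_t u\, dx$ coming from the $|u|^2$ summand).

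The crucial observation is that the boundary terms combine into perfect squares with the correct sign. At the right tip $x = X-t$ one gets $-\tfrac12\bigl(|\partial_t u|^2 + |\partial_x u|^2\bigr) + \partial_x u\,\partial_t u = -\tfrac12(\partial_t u - \partial_x u)^2 \leq 0$, and at the left tip $x = -(X-t)$ analogously $-\tfrac12(\partial_t u + \partial_x u)^2 \leq 0$; the extra $-\tfrac12|u|^2$ at each tip is also $\leq 0$. This sign is the analytic manifestation of the characteristic nature of the boundary of $K$, and verifying it is the main obstacle — everything else is routine. The remaining interior terms are bounded by
\begin{equation*}
	|q u\,\partial_t u| + |u\,\partial_t u| \leq \tfrac{1}{2}(\|q\|_{L^\infty(K)}+1)\bigl(|u|^2 + |\partial_t u|^2\bigr),
\end{equation*}
which yields $\partial_t E(t) \leq C E(t)$ with $C = \|q\|_{L^\infty(K)} + 1$. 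Grönwall's inequality together with $E(0) = 0$ forces $E(t) \equiv 0$ on $[0,X]$, hence $u \equiv 0$ on $K \cap \{t \geq 0\}$; combining with the symmetric argument for $v$ completes the proof.
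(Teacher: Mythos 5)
Your proof is correct and follows essentially the same route as the paper: an energy estimate on the shrinking diamond $|x|\le X-t$ (treating $t\ge 0$ and appealing to time reflection for $t<0$), with the characteristic boundary terms combining into nonpositive perfect squares and the potential term absorbed via Gr\"onwall. The only difference is cosmetic: you put $|u|^2$ into the energy from the start and get the pointwise inequality $\partial_t E\le C E$ directly, whereas the paper keeps the classical energy and controls $\int |u|^2$ afterwards through $u(t,x)=u(0,x)+\int_0^t\partial_t u(s,x)\,ds$ and the integral form of Gr\"onwall's inequality.
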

\begin{proof}
We will consider the case $t>0$. The case $t < 0$ is analogous and we omit its proof. 
		Let us set $I(t) = [-X + t, X - t]$ and define the energy
		\begin{equation*}
			E(t)= \frac{1}{2} \int_{I(t)} \mathcal{E}(t,x) dx.
		\end{equation*}
		Using Leibniz integral rule, we obtain
		\begin{align*}
			\partial_t E(t) =& -\frac{1}{2}\left(  |\partial_t u(t,  X - t)|^2 + |\partial_x u(t, X - t)|^2 \right)\\
			& -\frac{1}{2}\left(  |\partial_t u(t, - (X - t))|^2 + |\partial_x u(t,  - (X - t))|^2 \right)\\
			& + \int_{I(t)} \left( \partial_t u(t,x) \partial_t^2u(t,x) + \partial_x u(t,x) \partial_{t} \partial_x u(t,x) \right) dx.
		\end{align*}
		An integration by parts gives
		\begin{align*}
		\partial_t E(t) =& -\frac{1}{2}\left(  |\partial_t u(t,  X - t)|^2 + |\partial_x u(t, X - t)|^2 \right)\\
		& -\frac{1}{2}\left(  |\partial_t u(t, - (X - t))|^2 + |\partial_x u(t,  - (X - t))|^2 \right)\\
		& + \partial_t u(t,X - t)\partial_x u(t, X - t) - \partial_t u(t, - (X - t))\partial_x u(t, - (X - t))\\
		& + \int_{I(t)} \partial_t u(t,x) \left(  \partial_t^2u(t,x) - \partial_x^2 u(t,x) \right) dx.
		\end{align*}
		Therefore,
		\begin{align*}
			\partial_t E(t) =& -\frac{1}{2}\bigg(  |\partial_t u(t, X - t)| - |\partial_x u(t, X - t)| \bigg)^2\\
			& -\frac{1}{2}\bigg(  |\partial_t u(t,  - (X - t))| + |\partial_x u(t, - (X - t))| \bigg)^2\\
			& + \int_{I(t)} \partial_t u(t,x) \left(  \partial_t^2u(t,x) - \partial_x^2 u(t,x) \right) dx.
		\end{align*}
		Since the first two terms are non-positive, it follows
		\begin{multline*}
			\partial_t E(t) \leq - \int_{I(t)} \partial_t u(t,x) q(t,x) u(t,x) dx\\ + \int_{I(t)} \partial_t u(t,x) \left(  \partial_t^2u(t,x) - \partial_x^2 u(t,x) +q(t,x)u(t,x) \right) dx.
		\end{multline*}
		Let us use the notation
		\begin{equation*}
			P= \partial_{t}^2 - \partial_x^2 + q.
		\end{equation*}
		The Cauchy-Schwarz inequality imply
		\begin{multline*}
			E(t) = E(0) + \int_{0}^{t} \partial_t E(s) ds\\
			\leq E(0) + 2\int_0^t \int_{I(s)}\left( |\partial_t u(s,x)|^2 +|q(s,x) u(s,x)|^2 + |P u(s,x)|^2 \right)dx ds.
		\end{multline*}
		Hence,
		\begin{equation}\label{basic_energy}
			E(t) \leq E(0) + C \left( \int_0^t E(s)ds + \int_0^t \int_{I(s)} \left(|u(s,x)|^2 + |Pu(s,x)|^2\right)dxds\right),
		\end{equation}
		where $C$ is a constant depending on $q$ and $X$, which will change from line to line. 
		
		Let us now take care of the $|u|^2$ term caused by the potential $q$. We will use arguments close to the proof in \cite[Section 4.2]{Ladyzhenskaya1985}.
		%
		%
		For $x\in \mathbb{R}$ fixed, we know
		\begin{equation*}
			u(t,x) = u(0,x) + \int_{0}^{t} \partial_{t}u(s,x) ds.
		\end{equation*}
		Let us square it, and then, use Cauchy's inequality and the Cauchy-Schwarz inequality to obtain
		\begin{align}\label{ineq_timederiv}
			|u(t,x)|^2 \le 2 |u(0,x)|^2 + 2 t \int_0^t |\partial_t u(s,x)|^2 ds.
		\end{align}
		Next, we define
		\begin{equation*}
			z(t) = E(t) +  \int_{I(t)} |u(t,x)|^2 dx.
		\end{equation*}
		Using \eqref{basic_energy} and \eqref{ineq_timederiv}, we estimate
		\begin{multline*}
			z(t)  \leq  E(0) + C \left( \int_0^t E(s)ds + \int_0^t \int_{I(s)} \left( | u(s,x)|^2 + |P u(s,x)|^2 \right)dxds\right)\\
			+ 2 \int_{I(t)} |u(0,x)|^2 dx + 2 t\int_{I(t)} \int_0^t |\partial_t u(s,x)|^2 ds dx.
		\end{multline*}
		Since we consider $t\in [0, X]$, and since $I(t) \subset I(s)$ for $s \in [0,t]$, the last term can be estimated from above by $2X \int_{0}^{t} E(s) ds$. Therefore, we have
		\begin{equation*}
			z(t)  \leq  Cz(0) + C \int_{0}^{t} z(s) ds + C\int_0^t \int_{I(s)} |P u(s,x)|^2dxds.
		\end{equation*}
Using the Gronwall's inequality in the integral form, see e.g. \cite[Appendix B.2.k]{Evans1998}, we obtain
	\begin{align}\label{finite_speed_1d}
z(t) \le C \left(z(0) + \int_0^t \int_{I(s)} |P u(s,x)|^2dxds\right).
	\end{align}
		Since $z(0) = 0$ and $Pu = 0$, we conclude that $z(t) = 0$ for $t\in [0,X]$. Recalling the definition of the function $z$, we see that $u(t,x) = 0$ for $t\in [0,X]$ and $x\in I(t)$.
\end{proof}

\begin{theorem}[Unique continuation]\label{uniq_con_1d}
	Let $T>0$ and define 
	\begin{equation*}
		K= \{(t,x)\in \mathbb{R}\times\mathbb{R}: |x|\leq T - |t|\}.
	\end{equation*}
Let $q \in L^\infty(K)$ and let $u \in C^2(K)$ be a solution of the equation
	\begin{equation*}
		\begin{cases}
			\left(\partial_t^2 - \partial_x^2 + q(t,x)\right) u = 0, & \text{on } K;\\
			\left.u\right|_{x=0} = \left.\partial_x u\right|_{x=0} = 0, & \text{on } (-T,T).
		\end{cases}
	\end{equation*}
	Then $u = 0$ on $K$.
\end{theorem}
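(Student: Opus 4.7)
The plan is to reduce this statement directly to Theorem~\ref{th_fsp_1d} (finite speed of propagation) by swapping the roles of $t$ and $x$. In $1+1$ dimensions the d'Alembertian $\partial_t^2 - \partial_x^2$ only changes sign under $(t,x) \mapsto (x,t)$, and the diamond $K$ is invariant under this swap, so unique continuation across the timelike line $\{x=0\}$ should be nothing other than finite speed of propagation from the spacelike line $\{t=0\}$, with the two coordinates relabeled.

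Concretely, I would set $v(s,y) := u(y,s)$, so that $v$ is defined on the diamond
\[
K' = \{(s,y) \in \R^2 : T - |y| \geq |s|\},
\]
which is exactly the set appearing in Theorem~\ref{th_fsp_1d} with $X = T$, only now with $s$ playing the role of time and $y$ the role of space. A direct computation gives
\[
(\partial_s^2 - \partial_y^2) v(s,y) = -(\partial_t^2 - \partial_x^2) u(y,s) = q(y,s)\, v(s,y),
\]
so $v$ solves $(\partial_s^2 - \partial_y^2 + \tilde q) v = 0$ with $\tilde q(s,y) := -q(y,s) \in L^\infty(K')$. The hypotheses $u|_{x=0} = \partial_x u|_{x=0} = 0$ on $(-T,T)$ translate verbatim into $v|_{s=0} = \partial_s v|_{s=0} = 0$ on $(-T,T)$, which is precisely the Cauchy data required by Theorem~\ref{th_fsp_1d}.

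Applying that theorem to $v$ (together with the analogous case $s<0$, handled as remarked at the start of its proof) yields $v \equiv 0$ on $K'$, and hence $u \equiv 0$ on $K$. I do not expect a genuine obstacle here: everything hinges on the symmetry of $\partial_t^2 - \partial_x^2$ and of the diamond under the coordinate swap, together with the trivial observation that the transformed potential $\tilde q$ is still bounded and $v \in C^2(K')$ follows immediately from $u \in C^2(K)$. If one preferred to bypass the relabeling, the energy argument of Theorem~\ref{th_fsp_1d} could be copied verbatim by differentiating the energy $\tfrac{1}{2}\int_{-T+x}^{T-x}\mathcal{E}(t,x)\,dt$ in $x$ rather than in $t$; the boundary terms again combine into non-positive squares because the null cones of $\partial_t^2 - \partial_x^2$ are the $\pm 45^{\circ}$ lines regardless of which variable is treated as time.
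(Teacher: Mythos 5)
Your proof is correct and is essentially the paper's own argument: the paper disposes of this theorem in one line by interchanging the roles of $t$ and $x$ and invoking Theorem~\ref{th_fsp_1d}, which is exactly your reduction (with the sign flip $\tilde q = -q$ harmless since that theorem allows any $L^\infty$ potential). You have simply spelled out the coordinate swap in more detail than the paper does.
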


\begin{figure}%
	\centering
	\subfloat[\centering Finite speed of the wave propagation]{{\includegraphics[width=6.2cm]{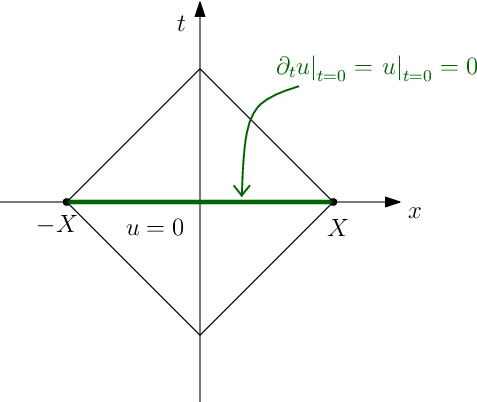} }}%
	\qquad
	\subfloat[\centering Unique continuation]{{\includegraphics[width=6.2cm]{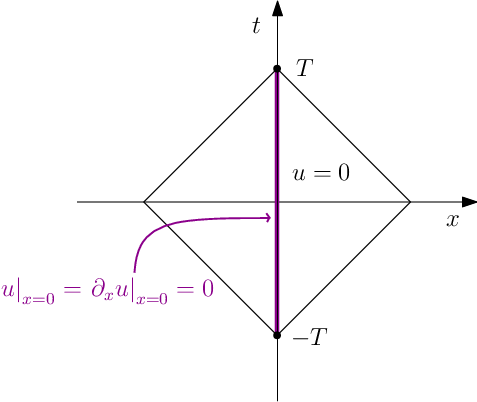} }}%
	\caption{$1 + 1$ dimensional case}%
	\label{1d}%
\end{figure}


\begin{proof}
By interchanging the roles of $t$ and $x$, we see that the theorem coincides with Theorem \ref{th_fsp_1d}; see Figure \ref{1d}.
\end{proof}

\subsection{Approximate controllability}
Let us recall that $u^f$ is the solution of \eqref{main eq lec 2}, when the boundary source is given by $f$. 
The following result is obtained by transposing unique continuation.

\begin{lemma}[Approximate controllability]\label{app con}
	Let $0< s< T\leq 1$, then the set 
	\begin{equation*}
		\mathcal{B}(s,T)= \left\{u^f(T,\cdot): f\in C_0^\infty(T-s,T)\right\}
	\end{equation*}
	is a dense subset of $L^2(0,s)$.
\end{lemma}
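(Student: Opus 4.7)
The proof proceeds by duality: by the Hahn--Banach theorem, density of $\mathcal{B}(s,T)$ in $L^2(0,s)$ is equivalent to the statement that any $\psi \in L^2(0,s)$ with $\int_0^s \psi(x)\, u^f(T,x)\, dx = 0$ for every $f \in C_0^\infty(T-s,T)$ must vanish. Extending such a $\psi$ by zero to $(0,1)$, I would introduce the adjoint wave $v$ as the solution of the backward initial-boundary value problem
\begin{equation*}
(\partial_t^2 - \partial_x^2 + q) v = 0 \text{ on } (0,T)\times(0,1), \ v|_{t=T} = 0, \ \partial_t v|_{t=T} = \psi, \ v|_{x=0} = v|_{x=1} = 0.
\end{equation*}

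The first step is a Green identity. Writing $P = \partial_t^2 - \partial_x^2 + q$ and integrating $v\,P u^f - u^f\,P v$ twice by parts on $(0,T)\times(0,1)$, the volume integral vanishes and only the boundary contributions at $t=T$ (where $v=0$ and $\partial_t v = \psi$) and at $x=0$ (where $v=0$ and $u^f = f$) survive, giving
\begin{equation*}
\int_0^s \psi(x)\, u^f(T,x)\, dx = -\int_0^T f(t)\, \partial_x v(t,0)\, dt.
\end{equation*}
Combined with the orthogonality hypothesis and the arbitrariness of $f \in C_0^\infty(T-s,T)$, this forces $\partial_x v(t,0) = 0$ for $t \in (T-s,T)$.

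The second step reduces the conclusion to Theorem \ref{uniq_con_1d}. Both $v$ and $\partial_x v$ now vanish on the lateral segment $\{0\} \times (T-s,T)$. I would extend $v$ oddly across $x=0$ (the extension is $C^2$ because $v|_{x=0} \equiv 0$ forces $\partial_t^2 v|_{x=0} = 0$, and hence $\partial_x^2 v|_{x=0} = 0$ by the equation) and oddly across $t=T$ (the extension is $C^2$ because $v|_{t=T} \equiv 0$ forces $\partial_x^2 v|_{t=T} = 0$, hence $\partial_t^2 v|_{t=T} = 0$), with $q$ extended evenly in both variables. The result is a $C^2$ solution $\tilde v$ of a wave equation with bounded potential on a neighborhood of $(T,0)$, for which $\tilde v$ and $\partial_x \tilde v$ vanish on $\{0\} \times (T-s,T+s)$. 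Translating Theorem \ref{uniq_con_1d} so that its vertex lies at $(T,0)$ and its parameter equals $s$ (the diamond $\{|x| \le s - |t-T|\}$ fits in the extended domain since $s<T$ and $s<1$), I would conclude $\tilde v \equiv 0$ on this diamond. In particular $\psi(x) = \partial_t v(T,x) = 0$ for $x \in (0,s)$.

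The principal obstacle is regularity: with only $\psi \in L^2$, the backward solution lies in $C([0,T]; H^1_0) \cap C^1([0,T]; L^2)$ rather than $C^2$, so Theorem \ref{uniq_con_1d} does not apply verbatim and the trace $\partial_x v(\cdot,0)$ must be interpreted via the hidden regularity of the Dirichlet wave equation. I would handle this by first establishing the Green identity for smooth terminal data, extending it to $\psi \in L^2$ by continuity via an estimate of the form $\|\partial_x v(\cdot,0)\|_{L^2(0,T)} \lesssim \|\psi\|_{L^2(0,1)}$, and then either invoking a weak-solution version of unique continuation for bounded coefficients, or redoing the extension-and-unique-continuation step at the level of weak solutions.
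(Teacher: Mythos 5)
Your proposal follows essentially the same route as the paper's proof: dualize, solve the backward adjoint problem with terminal velocity $\psi$, use the Green identity to conclude $\partial_x v(\cdot,0)=0$ on $(T-s,T)$, reflect oddly across $t=T$, and invoke the unique continuation result of Theorem \ref{uniq_con_1d} on a diamond with vertex data on $x=0$ to get $\psi=0$ on $(0,s)$. The only deviations are minor refinements rather than a different method: you additionally reflect oddly across $x=0$ (so the theorem applies on a full diamond, where the paper tacitly uses the half-diamond with $x\ge 0$), and you flag the $L^2$-terminal-data regularity issue that the paper's formal $C^2$ argument passes over in silence.
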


Here we use the identification
	\begin{align*}
L^2(0,s) = \{\phi \in L^2(0,1) : \supp(\phi) = (0,s)\}.
	\end{align*}
Note that due to finite speed of propagation the function $u^f(T, \cdot)$ is supported on $[0,s]$. Hence we may view $\mathcal{B}(s,T)$ as a subspace of $L^2(0,s)$.

\begin{proof}
	To show the density, it is enough to prove that $\mathcal{B}(s,T)^{\perp} = \{0\}$, where
	\begin{equation*}
		\mathcal{B}(s,T)^{\perp} = \left\{v\in L^2(0,s): (v,u)_{L^2(0,s)} = 0 \text{ for all } u \in 	\mathcal{B}(s,T)\right\}.
	\end{equation*}
	Let $h\in \mathcal{B}(s,T)^{\perp}$. Let $\omega$ be the solution of the equation
	\begin{equation*}
		\begin{cases}
			\left( \partial_t^2 - \partial_x^2 + q(x)\right)\omega(t,x) = 0 & \text{on } (0,T)\times (0,1);\\
			\left.\omega\right|_{x=0,1} = 0;\\
			\left.\omega\right|_{t=T} = 0;\\
			\left.\partial_t \omega\right|_{t=T} = h.
		\end{cases}
	\end{equation*}
	Let $f\in C_0^\infty(T-s,T)$. An integration by parts, together with the boundary and initial conditions for $u^f$ and $\omega$, give
	\begin{multline*}
		0 = \left(\left( \partial_t^2 - \partial_x^2 + q\right)u^f, \omega\right)_{L^2((0,T)\times(0,1))} - \left(u^f, \left( \partial_t^2 - \partial_x^2 + q\right)\omega\right)_{L^2((0,T)\times(0,1))}\\
		= -\int_0^1u^f(T,x)\partial_t\omega(T,x) dx - \int_0^Tu^f(t,0) \partial_x\omega(t,0) dt.
	\end{multline*}
	Since $\left.\partial_t \omega\right|_{t=T} = h\in \mathcal{B}(s,T)^{\perp}$, the first term of the right-hand side is zero, so that
	\begin{equation*}
	0 = \int_0^T f(t) \partial_x\omega(t,0) dt,
	\end{equation*}
	which is true for arbitrary $f\in C_0^\infty(T-s,T)$. Hence, $\partial_x\omega(t,0) = 0$ on $(T - s, T)$. 
	
	Let $\tilde{\omega}$ be the odd extension of $\omega$ to $(0, 2T)\times (0,1)$, more precisely,
	\begin{equation*}
		\tilde{\omega}(t,x)=
		\begin{cases}
			\omega(t,x) & \text{if } t\leq T\\
			-\omega (2T - t, x)  & \text{otherwise}.
		\end{cases}
	\end{equation*}
	Then $\tilde{\omega}$ satisfies
	\begin{equation*}
		\begin{cases}
			\left( \partial_t^2 - \partial_x^2 + q(x)\right)\tilde{\omega}(t,x) = 0 & \text{on } (0,2T)\times (0,1);\\
			\left.\tilde{\omega}\right|_{x=0} = 0 & \text{on } (0,2T);\\
			\left.\partial_x\tilde{\omega}\right|_{x=0} = 0 & \text{on } (T - s,T + s).
		\end{cases}
	\end{equation*}
	By unique continuation, see Theorem \ref{uniq_con_1d}, we obtain that $\tilde{\omega} = 0$ on 
	\begin{equation*}
		\{(t,x) \in (T - s, T + s)\times (0,1): |x| \leq s - |T - t|\}.
	\end{equation*}
	In particular, $\partial_t\omega(T,x) = h(x) = 0$ on $(0,s)$, so that $\mathcal{B}(s,T)^{\perp} = \{0\}$.
\end{proof}

\subsection{An integration by parts trick for the inverse problem}

Let us define the function
\begin{equation*}
	W_{f,h}(t,s) = (u^f(t,\cdot), u^h(s,\cdot) )_{L^2(0,1)}.
\end{equation*} 
Then the following holds
\begin{lemma}\label{scalar lec2}
	Let $f$, $h\in C_0^\infty(\R^+)$. The operator $\Lambda$ determines $W_{f,h}(t,t)$ for all $t>0$.
\end{lemma}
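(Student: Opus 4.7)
The plan is to treat $W_{f,h}(t,s)$ as a function of two independent variables and show that it satisfies a one-dimensional wave equation in $(t,s)$ whose right-hand side and initial/boundary data are entirely expressed in terms of $f$, $h$, $\Lambda f$, and $\Lambda h$. Once this is established, $W_{f,h}$ is uniquely determined from $\Lambda$ on the quadrant $\R^+ \times \R^+$, and restricting to the diagonal $s=t$ yields the claim.

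The key computation is to evaluate $(\partial_t^2 - \partial_s^2) W_{f,h}(t,s)$. Since $u^f$ solves \eqref{main eq lec 2}, differentiation under the integral and the PDE give
\begin{equation*}
(\partial_t^2 - \partial_s^2) W_{f,h}(t,s) = \int_0^1 \bigl( \partial_x^2 u^f(t,x)\, u^h(s,x) - u^f(t,x)\, \partial_x^2 u^h(s,x) \bigr)\, dx,
\end{equation*}
because the $q u^f u^h$ terms cancel. I would then integrate by parts twice in $x$; the boundary terms at $x=1$ vanish since $u^f(t,1) = u^h(s,1) = 0$, while at $x=0$ one uses $u^f(t,0) = f(t)$, $u^h(s,0) = h(s)$, $\partial_x u^f(t,0) = (\Lambda f)(t)$ and $\partial_x u^h(s,0) = (\Lambda h)(s)$. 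The result is
\begin{equation*}
(\partial_t^2 - \partial_s^2) W_{f,h}(t,s) = f(t)\,(\Lambda h)(s) - (\Lambda f)(t)\, h(s),
\end{equation*}
which is a source term that is manifestly determined by $\Lambda$ together with the (known) boundary sources $f$ and $h$.

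Next I would record the initial/boundary data for $W_{f,h}$ on $\R^+ \times \R^+$. Because $u^f(0,\cdot) = 0$ and $\partial_t u^f(0,\cdot) = 0$, one has $W_{f,h}(0,s) = 0$ and $\partial_t W_{f,h}(0,s) = 0$ for all $s \ge 0$; symmetrically, $W_{f,h}(t,0) = 0$ and $\partial_s W_{f,h}(t,0) = 0$. Viewing the equation as a one-dimensional wave equation in the spatial variable $s \in \R^+$ with $t$ as time, zero Cauchy data at $t=0$ and a Dirichlet condition at $s=0$, standard uniqueness (for instance via the energy estimate already established in Theorem \ref{th_fsp_1d}, or directly via d'Alembert's representation extended by odd reflection across $s=0$) shows that $W_{f,h}(t,s)$ is uniquely reconstructed from the source term above. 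Evaluating on the diagonal gives $W_{f,h}(t,t)$, which is what was to be shown.

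The only slightly delicate point is the integration by parts step: one needs to keep track of the six boundary contributions carefully and verify that the $q$-terms cancel, so that the right-hand side truly involves only $f, h, \Lambda f, \Lambda h$ and not the unknown potential. Everything else is routine: the $(t,s)$-wave problem with known source and zero Cauchy/Dirichlet data has a unique solution, so $W_{f,h}$, and in particular $W_{f,h}(t,t)$, is determined by $\Lambda$.
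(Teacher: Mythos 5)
Your proposal is correct and follows essentially the same route as the paper: compute $(\partial_t^2-\partial_s^2)W_{f,h}(t,s)=f(t)\,\Lambda h(s)-\Lambda f(t)\,h(s)$ after the $q$-terms cancel, then determine $W_{f,h}$ from this $\Lambda$-known source together with the vanishing Cauchy data at $t=0$. The only cosmetic difference is that you impose the (correct) Dirichlet condition $W_{f,h}(t,0)=0$ at $s=0$, whereas the paper notes that for the diagonal values $W_{f,h}(t,t)$ no condition at $s=0$ is even needed, by finite speed of propagation.
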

\begin{proof}
	Since $u^f$ and $u^h$ are solutions of \eqref{main eq lec 2}, there holds
	\begin{multline*}
		(\partial_t^2 - \partial_s^2) W_{f,h}(t,s) = \left( \partial_x^2 u^{f}(t,\cdot) - q u^f(t,\cdot), u^{h}(s,\cdot) \right)_{L^2(0,1)}\\
		- \left( u^{f}(t,\cdot), \partial_x^2 u^{h}(s,\cdot) - q u^h(s,\cdot) \right)_{L^2(0,1)}.
	\end{multline*}
The two terms with $q$ cancel out. 
	Further, integration by parts gives 
	\begin{multline*}
		(\partial_t^2 - \partial_s^2) W_{f,h}(t,s)  = \partial_xu^f(t,1) u^h(s,1) -  \partial_xu^f(t,0) u^h(s,0)\\
		-  u^f(t,1) \partial_xu^h(s,1) +  u^f(t,0) \partial_x u^h(s,0).
	\end{multline*}
	Boundary conditions for $u^f$ and $u^h$ give
	\begin{equation}\label{sec_der_of_prod}
		(\partial_t^2 - \partial_s^2)W_{f,h}(t,s) = f(t) \Lambda h(s) - \Lambda f(t) h(s).
	\end{equation}
	Let us denote the right-hand side of \eqref{sec_der_of_prod} by $F(t,s)$, then $W_{f,h}$ solves
	\begin{equation*}
		\begin{cases}
			(\partial_t^2 - \partial_s^2) W_{f,h}(t,s) = F(t,s) & \text{on } \R^+ \times \R^+,\\
			W_{f,h}(0,s) = \partial_tW_{f,h}(0,s) = 0.
		\end{cases}
	\end{equation*}
	Hence, $W_{f,h}$ is determined by $F$, and consequently, it is determined by $\Lambda$. 
(When evaluating $W_{f,h}(t,t)$, we do not need to impose any boundary condition at $s=0$ due to finite speed of propagation. Nonetheless, we could impose, for example, $W_{f,h}(t,0) = 0$ as this holds in view of the initial conditions satisfied by $u^h(s,x)$.)
\end{proof}

Consider two potentials $q_1$ and $q_2$. Let us write $u_1^f$ and $u_2^f$ for the solutions of \eqref{main eq lec 2} with $q$ replaced by $q_1$ and $q_2$, respectively. The corresponding Dirichlet-to-Neumann operators, see \eqref{DtN}, are denoted by $\Lambda_1$ and $\Lambda_2$. 
It follows from the above lemma that 
	\begin{align*}
(u_1^f(t,\cdot), u_1^h(t,\cdot) )_{L^2(0,1)}
= 
(u_2^f(t,\cdot), u_2^h(t,\cdot) )_{L^2(0,1)}
	\end{align*}
whenever $\Lambda_1 = \Lambda_2$.

\subsection{Solution to the inverse problem}

We will show that $\Lambda_1=\Lambda_2$ implies $q_1=q_2$. 
For a set $S \subset \R$, we define the indicator function
	\begin{align*}
1_S(x) = \begin{cases}
1 & x \in S, \\
0 & \text{otherwise}.
\end{cases}
	\end{align*}

\begin{lemma}\label{lem_of_norm_conv_of_sol_1d}
Assume that $\Lambda_1 = \Lambda_2$.
	Let $0<s<T$, let $f \in C_0^\infty(\R^+)$ and let
	\begin{align*}
f_j \in C_0^\infty(T-s, T), \quad j=1,2,\dots,
	\end{align*}
be a sequence such that
	\begin{equation}\label{conv_cutoff}
		u_1^{f_j}(T,\cdot) \rightarrow 1_{(0,s)}u_1^f(T,\cdot) \qquad \text{in } L^2(0,1).
	\end{equation}
	Then
	\begin{equation*}
		u_2^{f_j}(T,\cdot) \rightarrow 1_{(0,s)}u_2^f(T,\cdot) \qquad \text{in } L^2(0,1).
	\end{equation*}
\end{lemma}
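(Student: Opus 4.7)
The plan is to exploit the inner-product identity
\[
(u_1^g(T,\cdot), u_1^h(T,\cdot))_{L^2(0,1)} = (u_2^g(T,\cdot), u_2^h(T,\cdot))_{L^2(0,1)}, \qquad g, h \in C_0^\infty(\R^+),
\]
which the excerpt already derives from $\Lambda_1 = \Lambda_2$ via Lemma~\ref{scalar lec2}. First I would transfer the Cauchy property: applying the identity with $g = h = f_j - f_k$ gives
\[
\|u_2^{f_j}(T,\cdot) - u_2^{f_k}(T,\cdot)\|_{L^2(0,1)} = \|u_1^{f_j}(T,\cdot) - u_1^{f_k}(T,\cdot)\|_{L^2(0,1)},
\]
and the right-hand side tends to $0$ by \eqref{conv_cutoff}. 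Hence $\{u_2^{f_j}(T,\cdot)\}$ is Cauchy and converges to some $v \in L^2(0,1)$. Since each $f_j$ is supported in $(T-s,T)$, finite speed of propagation (Theorem~\ref{th_fsp_1d}) forces $u_2^{f_j}(T,\cdot)$ to be supported in $[0,s]$, and therefore $v = 1_{(0,s)} v \in L^2(0,s)$.

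Next I would identify $v$ with $1_{(0,s)} u_2^f(T,\cdot)$ by duality. For any $h \in C_0^\infty(T-s, T)$, taking the inner product with $u_2^h(T,\cdot)$ and $u_1^h(T,\cdot)$ respectively on the two sides and letting $j \to \infty$ yields
\[
(v, u_2^h(T,\cdot))_{L^2(0,1)} = (1_{(0,s)} u_1^f(T,\cdot), u_1^h(T,\cdot))_{L^2(0,1)},
\]
using the Cauchy convergence $u_2^{f_j}(T,\cdot) \to v$ on the $q_2$ side, the inner-product identity at finite $j$, and \eqref{conv_cutoff} on the $q_1$ side. Since $h \in C_0^\infty(T-s,T)$, finite speed of propagation also implies that $u_1^h(T,\cdot)$ and $u_2^h(T,\cdot)$ are supported in $[0,s]$, so the indicator on the right can be dropped; invoking the inner-product identity once more with $g = f$ rewrites the right-hand side as $(1_{(0,s)} u_2^f(T,\cdot), u_2^h(T,\cdot))_{L^2(0,1)}$. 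Therefore
\[
(v - 1_{(0,s)} u_2^f(T,\cdot),\, u_2^h(T,\cdot))_{L^2(0,1)} = 0 \qquad \text{for every } h \in C_0^\infty(T-s, T).
\]

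Finally, both $v$ and $1_{(0,s)} u_2^f(T,\cdot)$ lie in $L^2(0,s)$, and approximate controllability (Lemma~\ref{app con}), applied to the potential $q_2$, states that $\{u_2^h(T,\cdot) : h \in C_0^\infty(T-s,T)\}$ is dense in $L^2(0,s)$. The orthogonality above therefore forces $v = 1_{(0,s)} u_2^f(T,\cdot)$, which is the desired conclusion. The argument is essentially bookkeeping; the only real subtlety is the support accounting, which is what converts an $L^2(0,1)$ inner-product identity into an $L^2(0,s)$ orthogonality in the right space so that the approximate controllability density can be applied.
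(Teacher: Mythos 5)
Your argument is correct, but it takes a genuinely different route from the paper. The paper's proof is a projection argument: for any $\tilde f\in C_0^\infty(T-s,T)$ it splits
$\|u_k^{\tilde f}(T,\cdot)-u_k^f(T,\cdot)\|_{L^2(0,1)}^2$ into
$\|u_k^{\tilde f}(T,\cdot)-1_{(0,s)}u_k^f(T,\cdot)\|^2+\|(1_{(0,s)}-1)u_k^f(T,\cdot)\|^2$ (the cross term vanishes by the same support observation you use), identifies the infimum over $\tilde f$ with the second term via approximate controllability, transfers the resulting norm convergence from $k=1$ to $k=2$ through the inner-product identity of Lemma~\ref{scalar lec2}, and then reads off the conclusion from the Pythagorean identity; in effect $1_{(0,s)}u_k^f(T,\cdot)$ is recognized as the best approximation of $u_k^f(T,\cdot)$ from the closure of $\mathcal B(s,T)$, and the argument even yields the explicit error expression. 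You instead use the identity with $g=h=f_j-f_k$ to transfer the Cauchy property, obtaining a limit $v$ of $u_2^{f_j}(T,\cdot)$, and then identify $v$ by testing against the family $\{u_2^h(T,\cdot):h\in C_0^\infty(T-s,T)\}$, using Lemma~\ref{scalar lec2} at finite $j$, the support bookkeeping to insert and remove the indicator, and Lemma~\ref{app con} (for the potential $q_2$) as a density statement. Both proofs rest on the same three ingredients --- the inner-product identity, finite speed of propagation, and approximate controllability --- but yours separates existence of the limit (completeness plus the transferred Cauchy property) from its identification (duality against a dense set), whereas the paper's single Pythagorean computation does both at once and is slightly more quantitative. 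One small remark: the support fact you attribute to Theorem~\ref{th_fsp_1d} is really the boundary-value-problem version of finite speed of propagation (the theorem as stated concerns the free diamond); the paper asserts the same fact in the remark after Lemma~\ref{app con} while leaving the boundary case to the reader, so you are on the same footing, but it would be cleaner to say explicitly that you are using that variant.
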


\begin{proof}
Let $\tilde f \in C_0^\infty(T-s, T)$ and let $k=1,2$. We compute
\begin{align}\label{k1d0}
	\nonumber&\left\|u_k^{\tilde f}(T,\cdot) - u_k^{f}(T,\cdot)\right\|_{L^2(0,1)}^2 = \left\|u_k^{\tilde f}(T,\cdot) - 1_{(0,s)}u_k^{f}(T,\cdot)\right\|_{L^2(0,1)}^2 \\
	&\qquad + \left\|\left(1_{(0,s)} - 1\right) u_k^{f}(T,\cdot)\right\|_{L^2(0,1)}^2\\
	\nonumber&\qquad + 2\left(u_k^{\tilde f}(T,\cdot) - 1_{(0,s)}u_k^{f}(T,\cdot), \left(1_{(0,s)} - 1\right) u_k^{f}(T,\cdot)\right)_{L^2(0,1)}.
\end{align}
Due to finite speed of propagation, we know that $u_k^{\tilde f}(T,\cdot)$ is supported in $(0,s)$. Therefore, the functions
\begin{equation*}
	u_k^{\tilde f}(T,\cdot) - 1_{(0,s)}u_k^{f}(T,\cdot) 
	\qquad 
	 \left(1_{(0,s)} - 1\right) u_k^{f}(T,\cdot)
\end{equation*}
have disjoint supports, so that \eqref{k1d0} becomes
\begin{multline}\label{k1d}
	\left\|u_k^{\tilde f}(T,\cdot) - u_k^{f}(T,\cdot)\right\|_{L^2(0,1)}^2 = \left\|u_k^{\tilde f}(T,\cdot) - 1_{(0,s)}u_k^{f}(T,\cdot)\right\|_{L^2(0,1)}^2 \\
	+ \left\|\left(1_{(0,s)} - 1\right) u_k^{f}(T,\cdot)\right\|_{L^2(0,1)}^2.
\end{multline}
By approximate controllability, 
	\begin{align}\label{inf_proj}
\inf_{\tilde{f}\in C_0^\infty(T - s,T)} \left\| u_k^{\tilde{f}}(T,\cdot) - u_k^f(T,\cdot)\right\|_{L^2(0,1)}^2
= 
 \left\|\left(1_{(0,s)} - 1\right) u_k^{f}(T,\cdot)\right\|_{L^2(0,1)}^2.
	\end{align}

Let $f_j \in C_0^\infty(T-s, T)$ satisfy \eqref{conv_cutoff}.
Then
\begin{multline*}
	\lim_{j\rightarrow\infty}\left\|u_1^{f_j}(T,\cdot) - u_1^{f}(T,\cdot)\right\|_{L^2(0,1)}^2 
	=\inf_{\tilde{f}\in C_0^\infty(T - s,T)} \left\| u_1^{\tilde{f}}(T,\cdot) - u_1^f(T,\cdot)\right\|_{L^2(0,1)}^2.
\end{multline*}
Due to Lemma \ref{scalar lec2}, 
\begin{multline*}
	\lim_{j\rightarrow\infty}\left\|u_2^{f_j}(T,\cdot) - u_2^{f}(T,\cdot)\right\|_{L^2(0,1)}^2 
	=\inf_{\tilde{f}\in C_0^\infty(T - s,T)} \left\| u_2^{\tilde{f}}(T,\cdot) - u_2^f(T,\cdot)\right\|_{L^2(0,1)}^2.
\end{multline*}
Using \eqref{k1d} and \eqref{inf_proj} we see that
\begin{align*}
&\left\|u_2^{f_j}(T,\cdot) - 1_{(0,s)}u_2^{f}(T,\cdot)\right\|_{L^2(0,1)}^2 
\\&\quad
=
	\left\|u_2^{f_j}(T,\cdot) - u_2^{f}(T,\cdot)\right\|_{L^2(0,1)}^2  
	- \left\|\left(1_{(0,s)} - 1\right) u_2^{f}(T,\cdot)\right\|_{L^2(0,1)}^2 \to 0.
\end{align*}
\end{proof}

\begin{corollary}\label{cor_scalar_prod_supp}
	Assume that $\Lambda_1 = \Lambda_2$. Let $f, h \in C_0^\infty(\mathbb{R}^+)$ and let $0<s<T$. Then
	\begin{align}\label{scalar prod supp 0}
		\left(1_{(0,s)}u_1^f (T, \cdot), u_1^h (T, \cdot)\right)_{L^2(0,1)} =
		 \left(1_{(0,s)}u_2^f (T, \cdot), u_2^h (T, \cdot)\right)_{L^2(0,1)}.
	\end{align}
\end{corollary}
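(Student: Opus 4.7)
The plan is to combine the three preceding results in a single limiting argument. By approximate controllability (Lemma \ref{app con}) applied to the potential $q_1$, I can pick a sequence $f_j \in C_0^\infty(T-s, T)$ with
\[
u_1^{f_j}(T, \cdot) \to 1_{(0,s)} u_1^f(T, \cdot) \quad \text{in } L^2(0,1).
\]
The point of Lemma \ref{lem_of_norm_conv_of_sol_1d} is precisely that, under the hypothesis $\Lambda_1 = \Lambda_2$, the same sequence $f_j$ automatically gives
\[
u_2^{f_j}(T, \cdot) \to 1_{(0,s)} u_2^f(T, \cdot) \quad \text{in } L^2(0,1).
\]
This step does the heavy lifting, since approximate controllability for $q_2$ is not needed directly.

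Next, for each $k=1,2$, I take the $L^2(0,1)$ inner product against $u_k^h(T, \cdot)$. By the Cauchy--Schwarz inequality, the norm convergence above passes to the inner product:
\[
(u_k^{f_j}(T, \cdot), u_k^h(T, \cdot))_{L^2(0,1)} \to (1_{(0,s)} u_k^f(T, \cdot), u_k^h(T, \cdot))_{L^2(0,1)}.
\]
Now the left-hand side is exactly $W_{f_j, h}(T, T)$ in the notation of Lemma \ref{scalar lec2}, computed with respect to the potential $q_k$. That lemma tells us $W_{f_j, h}(T, T)$ is determined by the Dirichlet-to-Neumann map, so the equality $\Lambda_1 = \Lambda_2$ yields
\[
(u_1^{f_j}(T, \cdot), u_1^h(T, \cdot))_{L^2(0,1)} = (u_2^{f_j}(T, \cdot), u_2^h(T, \cdot))_{L^2(0,1)}
\]
for every $j$. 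Passing to the limit in $j$ gives the asserted identity \eqref{scalar prod supp 0}.

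I expect no genuine obstacle here: the three preceding lemmas have been organized so that exactly this chain of implications closes. The only thing to verify is that one can pick $f_j$ at all, which requires $s < T$ and $s \le 1$ to invoke Lemma \ref{app con}; the former is in the hypothesis of the corollary, and the latter is implicit because $1_{(0,s)}$ acts on $L^2(0,1)$.
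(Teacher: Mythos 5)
Your proposal is correct and follows essentially the same route as the paper: approximate controllability to pick $f_j$ for $q_1$, Lemma \ref{lem_of_norm_conv_of_sol_1d} to transfer the convergence to $q_2$, Lemma \ref{scalar lec2} to equate the inner products $\left(u_k^{f_j}(T,\cdot), u_k^h(T,\cdot)\right)_{L^2(0,1)}$ for $k=1,2$, and a passage to the limit. No substantive difference from the paper's argument.
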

\begin{proof}
By Lemma \ref{app con}, there is a sequence $\{f_j\} \subset C_0^\infty(T - s, T)$ such that 
	\begin{equation*}
		u_k^{f_j} (T,\cdot) \rightarrow 1_{(0,s)}u_k^f (T, \cdot)
	\end{equation*}
in $L^2(0,1)$ for $k=1$. By Lemma \ref{lem_of_norm_conv_of_sol_1d}
this holds also for $k=2$. In view of Lemma~\ref{scalar lec2}, we have
\begin{multline*}
	\left(1_{(0,s)} u_1^{f}(T,\cdot), u_1^h(T,\cdot)\right)_{L^2(0,1)} = \lim_{j\rightarrow \infty}\left(u_1^{f_j}(T,\cdot), u_1^h(T,\cdot)\right)_{L^2(0,1)}\\
	= \lim_{j\rightarrow \infty}\left(u_2^{f_j}(T,\cdot), u_2^h(T,\cdot)\right)_{L^2(0,1)} = \left(1_{(0,s)} u_2^{f}(T,\cdot), u_2^h(T,\cdot)\right)_{L^2(0,1)}.
\end{multline*}
\end{proof}

We will need the following lemma. Its proof is given in Section \ref{sec_go}. 

\begin{lemma}\label{lem_nonvanishing}
	Let $T\geq 1$, then for any $x_0\in (0,1)$ there is $f\in C_0^\infty(0,T)$ such that $u^f(T,x_0) \neq 0$. 
\end{lemma}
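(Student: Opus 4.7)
\medskip
\noindent\textbf{Proof plan.}
The plan is a geometric optics construction: I will take $f$ to be a short pulse on the boundary which, propagated along the characteristic $\{t - x = t_0\}$ with $t_0 := T - x_0$, arrives at $(T, x_0)$ with essentially unchanged amplitude. The hypothesis $T \geq 1$ together with $x_0 \in (0,1)$ is exactly what ensures $t_0 \in (0, T)$, so such a source can be supported in $(0, T)$.

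Fix a bump $\eta \in C_0^\infty(-1,1)$ with $\eta(0) = 1$ and for small $\epsilon > 0$ set $f_\epsilon(t) := \eta((t - t_0)/\epsilon) \in C_0^\infty(0, T)$. The leading order geometric optics ansatz is
\begin{equation*}
v_\epsilon(t, x) := f_\epsilon(t - x),
\end{equation*}
which solves $(\partial_t^2 - \partial_x^2) v_\epsilon = 0$, vanishes initially on $(0,1)$ (since $t_0 > 0$), and matches $f_\epsilon$ at $x = 0$. Defining the remainder $r_\epsilon := u^{f_\epsilon} - v_\epsilon$, a direct computation yields
\begin{equation*}
(\partial_t^2 - \partial_x^2 + q) r_\epsilon = -q v_\epsilon, \qquad r_\epsilon|_{x=0} = 0, \qquad r_\epsilon|_{x=1} = -f_\epsilon(t-1),
\end{equation*}
with vanishing initial data. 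I would then split $r_\epsilon = \tilde r_\epsilon + w_\epsilon$, where $\tilde r_\epsilon$ carries the interior source $-q v_\epsilon$ with homogeneous Dirichlet data, and $w_\epsilon$ solves the homogeneous equation with the inhomogeneous datum at $x = 1$.

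Two estimates then finish the proof. The function $-f_\epsilon(\cdot - 1)$ is supported in $(T - x_0 + 1 - \epsilon,\, T - x_0 + 1 + \epsilon)$; for $\epsilon < 2(1-x_0)$ this interval lies strictly above $T - 1 + x_0$, which is the largest time from which $(T, x_0)$ can be causally reached from the boundary $x=1$. Hence finite speed of propagation gives $w_\epsilon(T, x_0) = 0$. A standard energy estimate for $\tilde r_\epsilon$ followed by the $1$D Sobolev embedding $H^1(0,1) \hookrightarrow C([0,1])$ yields
\begin{equation*}
|\tilde r_\epsilon(T, x_0)| \leq C \|q v_\epsilon\|_{L^2((0,T)\times(0,1))} = O(\epsilon^{1/2}),
\end{equation*}
so that $u^{f_\epsilon}(T, x_0) = v_\epsilon(T, x_0) + r_\epsilon(T, x_0) = \eta(0) + O(\epsilon^{1/2}) = 1 + O(\epsilon^{1/2})$, which is nonzero for $\epsilon$ small enough.

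The main technical difficulty is the mismatch between the simple ansatz $v_\epsilon$ and the Dirichlet condition at $x=1$. Rather than correcting the ansatz by adding reflected progressing waves, I rely on the hypothesis $T \geq 1$ (combined with $x_0 < 1$) to shrink $\epsilon$ so that the mismatch is causally separated from the target point, contributing nothing to the value at $(T, x_0)$.
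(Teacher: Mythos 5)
Your argument is correct, but it takes a genuinely different route from the paper. The paper proves this lemma through the high-frequency machinery of Section~\ref{sec_go}: a WKB ansatz $e^{i\sigma\phi}(a_0+\sigma^{-1}a_1+\dots+\sigma^{-N}a_N)$ with $\phi=t-x$, transport equations for the $a_j$, an energy estimate giving $r_\sigma|_{t=T}=\mathcal O(\sigma^{-N})$ in $H^1$, the Sobolev embedding $H^1(\R)\subset C(\R)$, and non-vanishing of the amplitude on the ray, followed by a time translation. You replace the large frequency $\sigma$ by a concentration parameter $\epsilon$: a sharp progressing pulse $v_\epsilon(t,x)=f_\epsilon(t-x)$ launched at time $t_0=T-x_0$, an energy estimate $\lVert \tilde r_\epsilon(T,\cdot)\rVert_{H^1}=\mathcal O(\epsilon^{1/2})$ (since $\lVert qv_\epsilon\rVert_{L^2}=\mathcal O(\epsilon^{1/2})$) combined with the same one-dimensional Sobolev embedding, and a causality argument to discard the mismatch at $x=1$. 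What your version buys is that it works directly with the initial-boundary value problem \eqref{main eq lec 2}, including the Dirichlet condition at $x=1$ and the requirement $\supp f\subset(0,T)$, which the paper's whole-line construction handles only implicitly via ``a suitable translation in time''; it is also more elementary (no oscillatory phase, no conjugation formula). What the paper's version buys is generality: the same construction yields Lemma~\ref{lem_nonvanishing_gen} in higher dimensions and the reduction to the light ray transform, whereas your sharp-pulse argument is tailored to $1+1$ dimensions with unit speed. Two small points to tighten: you should also require $\epsilon<\min(x_0,1-x_0)$ so that $\supp f_\epsilon\subset(0,T)$ and the initial data of $v_\epsilon$ vanish on $(0,1)$; and note that for $\epsilon<1-x_0$ the trace $-f_\epsilon(t-1)$ vanishes for all $t\le T$, so $w_\epsilon\equiv 0$ on $[0,T]$ already by uniqueness --- this spares you the boundary version of finite speed of propagation, which the paper proves only for the interior diamond and leaves to the reader when the diamond meets $x=0$ or $x=1$.
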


Now we are ready to prove the main result of this section
\begin{theorem}\label{th_1d_main}
	If $\Lambda_1=\Lambda_2$, then $q_1=q_2$.
\end{theorem}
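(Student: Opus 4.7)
The plan is to combine Corollary~\ref{cor_scalar_prod_supp} with Lemma~\ref{lem_nonvanishing} to first upgrade the integrated identity of the corollary into a pointwise one, then deduce that $u_1^f(T,\cdot) = u_2^f(T,\cdot)$ on $(0,1)$ for every $T \ge 1$ and every $f \in C_0^\infty(\R^+)$, and finally read off $q_1 = q_2$ from the wave equations satisfied by the difference $w = u_1^f - u_2^f$.

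Fix $T \ge 1$. The identity in Corollary~\ref{cor_scalar_prod_supp} holds for every $s \in (0,1)$, so differentiating in $s$ yields the pointwise identity
\begin{equation*}
u_1^f(T,x)\, u_1^h(T,x) = u_2^f(T,x)\, u_2^h(T,x), \qquad x \in (0,1),\ f, h \in C_0^\infty(\R^+).
\end{equation*}
Given $x_0 \in (0,1)$, Lemma~\ref{lem_nonvanishing} furnishes $h \in C_0^\infty(0,T)$ with $u_1^h(T,x_0) \neq 0$, so $u_1^h(T,\cdot)$ is nonzero on an open interval $V \ni x_0$. Taking $f = h$ shows $(u_1^h)^2 = (u_2^h)^2$ on $V$, and thus the quotient $u_2^h/u_1^h$ takes values in $\{-1,+1\}$ there and equals a constant $\epsilon_V$ by continuity. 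Dividing the pointwise identity by $u_1^h(T,\cdot)$ gives $u_1^f(T,\cdot) = \epsilon_V\, u_2^f(T,\cdot)$ on $V$ for every $f$. Covering $(0,1)$ with such intervals and comparing signs on overlaps (choosing, for any overlap point, an $f$ for which $u_2^f$ does not vanish there, again via Lemma~\ref{lem_nonvanishing} applied with potential $q_2$) forces a common sign $\epsilon = \epsilon(T) \in \{-1,+1\}$. Letting $x \to 0^+$ in $u_1^f(T,\cdot) = \epsilon\, u_2^f(T,\cdot)$ and using $u_k^f(T,0) = f(T)$ for any $f$ with $f(T) \neq 0$ pins down $\epsilon = 1$. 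Hence $u_1^f(T,\cdot) = u_2^f(T,\cdot)$ on $(0,1)$ for every $T \ge 1$ and every $f \in C_0^\infty(\R^+)$.

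Finally, set $w = u_1^f - u_2^f$. Subtracting the two wave equations gives $(\partial_t^2 - \partial_x^2) w = q_2 u_2^f - q_1 u_1^f$. On the open set $(1,\infty) \times (0,1)$ we have both $w \equiv 0$ and $u_1^f = u_2^f$, so this collapses to
\begin{equation*}
(q_2 - q_1)(x)\, u_1^f(t,x) = 0, \qquad (t,x) \in (1,\infty) \times (0,1).
\end{equation*}
For each $x_0 \in (0,1)$, one more application of Lemma~\ref{lem_nonvanishing}, with potential $q_1$ and some $T > 1$, supplies $f$ with $u_1^f(T,x_0) \neq 0$, which forces $q_1(x_0) = q_2(x_0)$. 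The main obstacle is the sign-bookkeeping in the middle paragraph: the pointwise identity determines $u_1^f$ only up to a sign, and fixing this sign globally requires a connected cover of $(0,1)$ together with the boundary data at $x = 0$; everything after the identity $u_1^f = u_2^f$ is an elementary consequence of the two wave equations.
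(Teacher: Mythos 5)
Your proof is correct and follows essentially the same route as the paper: differentiate the identity of Corollary~\ref{cor_scalar_prod_supp} in $s$ to get the pointwise product identity, use Lemma~\ref{lem_nonvanishing} to divide and to control the locally constant sign, and then read off $q_1=q_2$ from the wave equations at points where a solution is nonvanishing. The only departures are cosmetic: you glue the sign by a covering argument and pin it to $+1$ via the boundary value $u_k^f(T,0)=f(T)$, whereas the paper keeps the ambiguity $u_1^h=\pm u_2^h$ (it cancels in the last step anyway); your version has the small advantage of making explicit that the identity holds on the open set $(1,\infty)\times(0,1)$ before applying $\partial_t^2$.
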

\begin{proof}
We consider $T > 1$ and let $x\in (0,1)$. Let us differentiate the left-hand side of \eqref{scalar prod supp 0}:
\begin{multline*}
	\partial_x 	\left(1_{(0,x)}(\cdot)u_1^f (T, \cdot), u_1^h (T, \cdot)\right)_{L^2(0,1)} \\
	= \partial_x\int_{0}^{x} u_1^f (T, y) u_1^h (T, y) dy = u_1^f (T, x) u_1^h (T, x).
\end{multline*}
Therefore, by \eqref{scalar prod supp 0}, we obtain
\begin{equation}\label{bububu}
	u_1^f (T, x) u_1^h (T, x) = u_2^f (T, x) u_2^h (T, x).
\end{equation}

Due to Lemma \ref{lem_nonvanishing},
for each $x \in (0,1)$ there is $f \in C_0^\infty(\R^+)$ such that 
$u_1^f(T,x) \ne 0$. Choosing such $f$ we may define
	\begin{align*}
w(x) = \frac{u_2^f (T, x)}{u_1^f (T, x)}.
	\end{align*}
We emphasize that the choice of $f$ depends on $x$, and it may appear that $w$ could be non-smooth. However, it is smooth. Indeed, 
	\begin{align}\label{w_eq}
u_1^h (T, x) = w(x) u_2^h (T, x),
	\end{align}
and for each $x_0 \in (0,1)$ there is $h \in C_0^\infty(\R^+)$ such that 
$u_2^h(T,x_0) \ne 0$. Thus, for $x$ near $x_0$,
	\begin{align*}
w(x) = \frac{u_1^h (T, x)}{u_2^h (T, x)}.
	\end{align*}
As the right-hand side is smooth for $x$ near $x_0$, and as $x_0 \in (0,1)$ is arbitrary, we see that $w$ is smooth. 

Let us now take $f = h$ in \eqref{bububu} and use \eqref{w_eq},
	\begin{align*}
(u_2^h(T,x))^2 = (u_1^h(T,x))^2 = w^2(x) (u_2^h(T,x))^2.
	\end{align*} 
Choosing again $x \in (0,1)$ and $h \in C_0^\infty(\R^+)$ such that 
$u_2^h(T,x) \ne 0$, we see that $w^2(x) = 1$.
The smoothness of $w$ implies that it is a constant function
taking the value $1$ or $-1$. 

To summarize 
	\begin{align*}
u_1^h (T, x) = \pm u_2^h (T, x),
	\end{align*}
for all $T > 1$, $x \in (0,1)$ and $h \in C_0^\infty(\R^+)$.

There holds
	\begin{align*}
0 = (\partial_t^2 - \partial_x^2 + q_1) u_1^h 
= \pm (\partial_t^2 - \partial_x^2 + q_1) u_2^h 
= \pm (q_1 - q_2) u_2^h.
	\end{align*}
Choosing such $h \in C^\infty_0(\R^+)$ that $u_2^h(T,x) \ne 0$, we get 
$q_1(x) = q_2(x)$.
\end{proof}

\section{Boundary Control method on a Riemannian manifold}

The main advantage of the Boundary Control method is that it works in general geometric settings. 
We refer to \cite{Lee} for an introduction to Riemannian geometry. A reader, who is not interesting in geometry, may simply replace $(M,g)$ by a domain $M \subset \R^n$ with a smooth boundary $\partial M$.

Let $(M,g)$ be a compact Riemannian manifold with boundary and $q\in C^{\infty}(M)$. Consider the wave equation 
    \begin{align}\label{main_eq_gen}
\begin{cases}
\partial_t^2 u - \Delta_g u + q u = 0 & \text{on } \mathbb{R}^+ \times M,
\\
\left.u\right|_{x \in \partial M} = f,
\\
\left.u\right|_{t = 0} = \left.\partial_t u\right|_{t=0} = 0.
\end{cases}
    \end{align}
Here $\Delta_g$ is the Laplacian. In local coordinates, it is given as follows 
\begin{equation*}
	\Delta_g u = \frac{1}{\sqrt{\textrm{det}(g)}} \sum_{i,j=1}^{n} \partial_i \left(\sqrt{\textrm{det}(g)} g^{ij}\partial_j u\right).
\end{equation*}
In particular, in the case that $M \subset \R^n$, the above expression gives the usual Laplacian, since $g$ is the identity matrix.

We define the Dirichlet-to-Neumann map 
	\begin{align*}
\Lambda: C_0^\infty(\mathbb{R}^+\times \partial M) \rightarrow C^\infty(\mathbb{R}^+\times \partial M)
	\end{align*}
 as follows
\begin{equation*}
	\Lambda f =\left.\partial_\nu u^f\right|_{\mathbb{R}^+\times\partial M},
\end{equation*}
where $u^f$ is the solution in of \eqref{main_eq_gen}.
Here $\nu$ is the exterior unit normal on $\partial M$.
We will study the inverse problem: determine the potential $q$ given $\Lambda$ and the Riemannian manifold $(M,g)$.

The finite speed of propagation will be formulated in terms of the  distance function $d_g$ on $(M,g)$. That is, it takes the time $d_g(x,y)$ for a wave to propagate from a point $x \in M$ to a point $y \in M$.
We recall the definition of the distance on Riemannian manifold. 

If $(M,g)$ is is a connected Riemannian manifold and $x,y\in M$, the
distance between $x$ and $y$ is defined by
\begin{equation*}
	d_g(x,y) = \inf \{ l(\gamma):  \gamma\in C_{x,y}\},
\end{equation*}
where 
\begin{align*}
	C_{x,y} = \{ \gamma:[0,\ell] \rightarrow M: \gamma &\text{ is a piecewise smooth, continuous curve and }  \\&\gamma(0) = x, \; \gamma(\ell) = y \}
\end{align*}
and
\begin{equation*}
	l(\gamma) = \int_{0}^{\ell}  \sqrt{g(\dot{\gamma}(t), \dot{\gamma}(t))} dt. 
\end{equation*}
Here $g$ is viewed as a bilinear form. 
When $M \subset \R^n$ is convex, $d_g$ is the usual Euclidean distance.

Before entering into the multidimensional geometric case, let us see how non-constant speed of sound gives the speed of propagation in $1+1$ dimensions.

\subsection{Finite speed of propagation with nonconstant speed of sound}

Let $c \in C^\infty([0,1])$ and suppose that $c(x) > 0$ for all $x \in [0,1]$.
Consider a solution $u$ to 
    \begin{align}\label{eq_wave_c}
		(\partial_t^2 - c^2 \partial_x^2) u = 0 \quad \text{on } \mathbb{R}^+\times (0,1),
    \end{align}
which satisfies 
\begin{equation*}
	\left.u\right|_{x=1} = 0.
\end{equation*}
We set
\begin{equation*}
	\mathcal{E}(t,x)  = c^{-2}(x)|\partial_t u(t,x)|^2 + |\partial_x u(t,x)|^2.
\end{equation*}
We want to find an increasing function $r \in C^\infty(\R)$ with $r(0) = 0$ such that the energy
\begin{equation*}
	E(t) = \frac{1}{2} \int_{r(t)}^{1} \mathcal{E}(t,x) dx
\end{equation*}
satisfies
\begin{equation}\label{partial E}
	\partial_tE(t) \leq 0.
\end{equation}
The slower $r$ increases, the better finite speed of propagation result we will get. 

We write, by using Leibniz integral rule,
\begin{align*}
	\partial_t E(t) &= - \frac{1}{2} r'(t) \mathcal{E}(t, r(t)) + \frac{1}{2} \int_{r(t)}^{1} \partial_t \mathcal{E} (t,x)dx \\
	& = - \frac{1}{2} r'(t) \mathcal{E}(t, r(t)) + \int_{r(t)}^{1}\left(  c^{-2} (x) \partial_t u(t,x) \partial_t^2 u(t,x) + \partial_x u(t,x) \partial_{tx} u(t,x)\right) dx.
\end{align*}
Integration by parts gives
\begin{multline*}
	\partial_t E(t) = - \frac{1}{2} r'(t) \mathcal{E}(t, r(t)) + \left[\partial_x u(t,x)\partial_t u(t,x)\right]^{1}_{r(t)}\\
	+ \int_{r(t)}^{1} \left(c^{-2} \partial_t^2u(t,x) + \partial_x^2u(t,x)\right) \partial_t u(t,x) dx.
\end{multline*}
Since $u$ is the solution of the wave equation the last integral is $0$. 
Moreover, due to the boundary condition, the second term is $0$ at $x=1$, so that
\begin{equation*}
	\partial_t E(t) = -\frac{1}{2} r'(t) \mathcal{E}(t,r(t)) - \left.\left(\partial_tu(t,x)\partial_x u(t,x)\right)\right|_{x = r(t)}.
\end{equation*}
Note that $r'>0$ since $r$ is increasing. Therefore, using a simple inequality 
\begin{equation*}
	2 xy \leq \alpha x^2 + \frac{1}{\alpha} y^2 , \qquad \text{for } \alpha, x, y>0,
\end{equation*}
we know that \eqref{partial E} holds if
\begin{equation*}
	\frac{1}{r'(t)} = r'(t) c^{-2}(r(t)),
\end{equation*}
or equivalently, 
\begin{equation}\label{ode r}
	r'(t) = c(r(t)).
\end{equation}
This equation is solvable. Indeed, consider the following function
\begin{equation*}
	\rho(x)  = \int_0^x \frac{1}{c(y)} dy.
\end{equation*}
Since $\rho$ is a strictly increasing function, its inverse function exists, so we can set
\begin{equation*}
	r(t) = \rho^{-1} (t).
\end{equation*}
It is easy to check that this function indeed satisfies \eqref{ode r}. Observe that if $c=1$ identically, we obtain $r(t) = t$.

The function $\rho$ is the travel time between $0$ and $x$: the time necessary for perturbation at $0$ to reach $x$. It also can be interpreted as the distance from $0$ to $x$.
Indeed, let us consider the metric
\begin{equation*}
	g = c^{-2} dx,
\end{equation*}
on $[0,1]$. Then, for $x\in [0,1]$, we get
\begin{equation*}
	d_g(0,x) = \inf_{\gamma\in C_{0,x}} \int_{0}^{b}  \sqrt{g_{\gamma(t)}(\dot{\gamma}(t), \dot{\gamma}(t))} dt.
\end{equation*}
Let $\gamma$ be a minimizer curve. Then $\gamma :[0,b] \rightarrow [0,x]$ needs to be a bijection. By changing the coordinates $\tau = \gamma(t)$, we obtain
\begin{equation*}
	d_g(0,x) = \int_{0}^{b} \dot{\gamma}(t) \frac{1}{|c(\gamma(t))|}dt = \int_{0}^{x} \frac{1}{|c(\tau)|} d\tau = \rho(x).
\end{equation*}

\subsection{Main tools}

As in the one dimensional case, the main ingredients of solving the inverse problem we are considering here are the finite speed of propagation and unique continuation. 

\begin{theorem}[Finite speed of propagation]
	Let $\Omega\subset M$ be open, and define 
	\begin{equation*}
		\mathcal{C}= \{(t,x)\in \mathbb{R}\times M: d_g(x, M\setminus\Omega)>|t|\}.
	\end{equation*}
Suppose that $u \in C^2(\R \times M)$ is a solution of 
	\begin{equation*}
		\begin{cases}
			\left(\partial_t^2 - \Delta_g + q(x)\right) u = 0, & \text{on } \mathbb{R}\times M;\\
			\left.u\right|_{t=0} = \left.\partial_t u\right|_{t=0} = 0, & \text{on } \Omega.
		\end{cases}
	\end{equation*}
Suppose, furthermore, that $\left.u\right|_{\mathcal C \cap (\R \times \partial M)} = 0$.
	Then $\left.u\right|_{\mathcal{C}} = 0$.
\end{theorem}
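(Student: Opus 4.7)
The plan is to imitate the energy argument for Theorem~\ref{th_fsp_1d} with intervals of the form $I(t) = [-X + t, X - t]$ replaced by geodesic balls that shrink at unit speed. It suffices to show that $u(T, x_0) = 0$ for an arbitrary $(T, x_0) \in \mathcal{C}$ with $T > 0$ (the case $T < 0$ is symmetric). Setting $r := d_g(x_0, M \setminus \Omega) > T$, I would introduce the cone slices
\[ B(t) = \{x \in M : d_g(x, x_0) < T - t\}, \qquad t \in [0,T], \]
and check by the triangle inequality that for every $x \in B(t)$,
\[ d_g(x, M \setminus \Omega) \ge r - d_g(x_0, x) > T - (T - t) = t. \]
In particular $\{t\} \times B(t) \subset \mathcal{C}$, so $B(0) \subset \Omega$ (hence $u = \partial_t u = 0$ on $B(0)$ by the initial condition) and every point of the lateral boundary $\bigcup_t (\{t\} \times \partial B(t))$ that lies in $\R \times \partial M$ belongs to $\mathcal{C} \cap (\R \times \partial M)$, where $u$ vanishes by hypothesis.

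I would then define the local energy
\[ E(t) = \frac{1}{2} \int_{B(t)} \bigl(|\partial_t u|^2 + |\nabla_g u|_g^2\bigr)\, dV_g \]
and differentiate using the coarea/Leibniz formula applied to $\phi(t, x) = d_g(x, x_0) + t - T$, noting that $|\nabla_g \phi|_g = 1$ almost everywhere. An integration by parts together with the wave equation and the boundary condition on $\partial M$ yields
\[ \partial_t E(t) = -\int_{B(t)} q\,u\,\partial_t u\, dV_g + \int_{\partial B(t) \cap \mathrm{int}\,M} \Bigl(\partial_t u\,\partial_\nu u - \tfrac12(|\partial_t u|^2 + |\nabla_g u|_g^2)\Bigr)\, dS, \]
where $\nu$ is the outer unit normal. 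Since $|\partial_\nu u| \le |\nabla_g u|_g$, the boundary integrand is pointwise $\le |\partial_t u||\partial_\nu u| - \tfrac12(|\partial_t u|^2 + |\partial_\nu u|^2) \le 0$, directly mirroring the non-positive squared terms that appear in the proof of Theorem~\ref{th_fsp_1d}. This gives
\[ \partial_t E(t) \le C\int_{B(t)} \bigl(|u|^2 + |\partial_t u|^2\bigr) dV_g. \]

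The $|u|^2$ term is then absorbed by the same device used in the proof of Theorem~\ref{th_fsp_1d}: writing $u(t, x) = \int_0^t \partial_s u(s, x)\, ds$ is legitimate because $u(0, \cdot) = 0$ on $B(0) \supset B(s)$ for every $s \in [0, T]$, and hence
\[ \int_{B(t)} |u(t, x)|^2 dV_g \le 2T \int_0^t E(s)\, ds. \]
Combining everything and integrating in $t$ produces an integral inequality of the form $E(t) \le C \int_0^t E(s)\, ds$, so Gronwall and $E(0) = 0$ give $E \equiv 0$ on $[0, T]$. Then $\partial_t u = 0$ on the solid backward cone plus $u(0, \cdot) = 0$ on $B(0)$ forces $u \equiv 0$ on that cone, and letting $t \uparrow T$ gives $u(T, x_0) = 0$.

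The main technical obstacle is that $d_g(\cdot, x_0)$ is only Lipschitz globally, so the level sets $\partial B(t)$ fail to be smooth on the cut locus and the differentiation of $E(t)$ requires care. This is resolved in the standard way, either by invoking the Lipschitz coarea formula together with the fact that the cut locus has measure zero, or by approximating $d_g(\cdot, x_0)$ by smooth functions (using Sard's theorem to select regular values). These are the same technicalities that appear in any Riemannian proof of finite speed of propagation, and do not alter the structure of the argument.
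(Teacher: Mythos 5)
The paper itself does not prove this theorem: it is stated and the proof is delegated to \cite[Theorem 2.47]{Matti}. So your proposal is not competing with an argument in the text but supplying one, and the route you choose is the natural one and, in outline, correct: you transplant the $1+1$-dimensional energy argument of Theorem~\ref{th_fsp_1d}, replacing the shrinking intervals $I(t)$ by shrinking geodesic balls $B(t)=\{x: d_g(x,x_0)<T-t\}$, checking via the triangle inequality that the truncated cone stays inside $\mathcal C$ (so the initial data vanish on $B(0)$ and $u$ vanishes on the portion of $\R\times\partial M$ met by the cone), and then running the same absorption of the zeroth-order term through $u(t,x)=\int_0^t\partial_s u(s,x)\,ds$ and Gronwall. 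The pointwise bound $\partial_t u\,\partial_\nu u-\tfrac12\bigl(|\partial_t u|^2+|\nabla_g u|_g^2\bigr)\le-\tfrac12\bigl(|\partial_t u|-|\partial_\nu u|\bigr)^2\le0$ on the lateral boundary is the correct analogue of the squared terms in the one-dimensional proof, and deducing $u|_{\mathcal C}=0$ from $E\equiv0$ on each truncated cone, letting $t\uparrow T$, is fine. This energy-in-the-domain-of-dependence argument is also the standard route in the literature the paper points to; what the citation buys the paper is precisely the careful treatment of the technical layer you defer.

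Two caveats deserve more care than your closing paragraph gives them. First, on a manifold with boundary the set where $d_g(\cdot,x_0)$ fails to be smooth is not just the classical cut locus: minimizing paths may touch or run along $\partial M$, so ``the cut locus has measure zero'' is not quite the right invocation. What saves you is Rademacher (a.e.\ differentiability of the Lipschitz function $d_g$) together with the fact that you only need $|\nabla_g d_g|_g\le1$ and the inequality goes the right way: the one-sided estimate $\frac{d}{dR}\int_{\{d_g<R\}}F\,dV_g\ \ge\ \int_{\{d_g=R\}}F\,d\mathcal H^{n-1}$ for a.e.\ $R$ and nonnegative $F$ shows the moving-boundary term still dominates the flux term, which is bounded pointwise by the energy density; alternatively, one can prove the statement for small balls within the injectivity radius, where $d_g$ is smooth away from the center and $\partial M$, and propagate by a covering/continuous-induction argument, avoiding the Lipschitz coarea and finite-perimeter Gauss--Green machinery altogether. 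Second, two small points should be explicit: on $B(t)\cap\partial M$ the flux $\int\partial_t u\,\partial_\nu u\,dS$ vanishes not merely because $u=0$ there at that instant, but because $u\equiv0$ on the relatively open set $\mathcal C\cap(\R\times\partial M)$, whence $\partial_t u=0$ there; and your constant $C$ needs $q$ bounded on $\overline{B(0)}$ (and the balls precompact), which is automatic when $M$ is compact, the setting relevant to the Boundary Control method, but should be stated.
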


We omit the proof and refer to \cite[Theorem 2.47]{Matti}.

\begin{theorem}[Unique continuation]\label{uniq_con_nd}
Let $\Gamma\subset \partial M$ be open, and define
	\begin{equation*}
		K = \{(t,x) \in \R\times M: d_g(x,\Gamma) \leq T - |t|\}.
	\end{equation*}
Suppose that $u\in C^2(\R\times M)$ is a solution of 
	\begin{equation*}
		\begin{cases}
			\left(\partial_t^2 - \Delta_g + q(x)\right) u = 0, & \text{on } \mathbb{R}\times M;\\
		\left.u\right|_{[-T,T] \times \Gamma} = \left.\partial_\nu u\right|_{[-T,T] \times \Gamma} = 0.
		\end{cases}
	\end{equation*}
Then $\left.u\right|_{K} = 0$.
\end{theorem}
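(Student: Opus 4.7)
The plan is to derive Theorem \ref{uniq_con_nd} from the sharp hyperbolic unique continuation theorem of Tataru. The key observation is that $P = \partial_t^2 - \Delta_g + q(x)$ has coefficients that do not depend on $t$, hence (trivially) real-analytic in $t$. Classical Holmgren would require full analyticity in all variables; Tataru's theorem relaxes this to analyticity in one variable only, at the cost of imposing pseudoconvexity of the weight in the non-analytic directions. That is exactly the setting we are in, and it gives the sharp cone $K$ as the domain of uniqueness.

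First, I would reduce to an interior unique continuation problem by extending across $\Gamma$. One collars a neighborhood of $\Gamma$ and extends $g$ and $q$ smoothly to a slightly larger manifold $\widetilde M$, then sets $u = 0$ outside $M$. The hypotheses $u|_{[-T,T]\times\Gamma} = \partial_\nu u|_{[-T,T]\times\Gamma} = 0$ ensure the extension lies in $C^2$ and still solves $Pu = 0$ in a neighborhood of $[-T,T]\times\Gamma$ inside $\R\times\widetilde M$. Consequently, $u$ vanishes on an open set of $\R\times\widetilde M$, and one only needs to propagate the vanishing inward until it fills the diamond $K$.

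Next, I would foliate $K$ by strongly pseudoconvex surfaces and apply a Carleman estimate on each leaf. A natural family of level functions is
\begin{equation*}
\psi_{\lambda,\epsilon}(t,x) = d_g(x,\Gamma)^2 - \lambda\, t^2 + \epsilon,
\end{equation*}
with $\lambda > 1$ and $\epsilon > 0$; after replacing $d_g(\cdot,\Gamma)$ by a smooth local approximation on a chart avoiding its cut locus, one checks pseudoconvexity of the level sets with respect to the principal symbol $\sigma(P) = \tau^2 - |\xi|_g^2$. Tataru's Carleman inequality with a convexified weight of the form $e^{\tau\, e^{\mu \psi_{\lambda,\epsilon}}}$ then yields local unique continuation across each level set, in a shrinking neighborhood of the starting point.

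Finally, I would iterate. Beginning from the open set where $u$ is already known to vanish, a continuity-and-openness argument (sliding the level value $c$ in $\{\psi_{\lambda,\epsilon} > c\}$, and letting $\epsilon \downarrow 0$ and $\lambda \downarrow 1$) propagates the vanishing through the interior of $K$, and continuity of $u$ closes up the conclusion on $K$. The main obstacle is the pseudoconvexity verification near the cut locus of $\Gamma$ and the check that the local propagation patches actually exhaust $K$ when $\lambda$ is taken close to $1$; these are handled by working chart by chart and by the standard observation that $\bigcup_{\lambda > 1,\,\epsilon > 0} \{\psi_{\lambda,\epsilon} > 0\}$ exhausts the interior of $K$. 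A detailed implementation of this strategy, including the Carleman estimate itself, is given in \cite[Chapter 2]{Matti}.
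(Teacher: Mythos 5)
The paper itself does not prove Theorem \ref{uniq_con_nd}: the proof is omitted with a reference to \cite[Theorem 3.16]{Matti}, so your sketch has to be judged against the Tataru-based argument given there. Your high-level plan — extend $u$ by zero across $\Gamma$ into a slightly enlarged manifold using the vanishing Cauchy data (this step is fine and standard), observe that the coefficients are $t$-independent and hence partially analytic, invoke Tataru's unique continuation theorem locally, and sweep the zero set through the double cone — is indeed the standard route and the one taken in the cited monograph.

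However, two points in your implementation would fail as written. First, the foliation $\psi_{\lambda,\epsilon}(t,x)=d_g(x,\Gamma)^2-\lambda t^2+\epsilon$ has the wrong geometry for this sweep: its level sets are cone-like surfaces that open \emph{away} from $\Gamma$ as $|t|$ grows (at height $|t|$ they reach points with $d_g(x,\Gamma)\approx\sqrt{\lambda}\,|t|$, and the sets $\{\psi_{\lambda,\epsilon}>c\}$ at $t=0$ contain \emph{all} of $M$ once $\epsilon$ or $c$ is adjusted), whereas the target region $K$ shrinks to $\Gamma$ as $|t|\to T$ and the data are only given for $|t|\le T$. Sliding the level value therefore either asserts vanishing at points outside $K$ — where the conclusion is false in general, so no Carleman argument can deliver it — or never reaches the tip of the diamond; and the claimed exhaustion of the interior of $K$ by $\bigcup_{\lambda>1,\epsilon>0}\{\psi_{\lambda,\epsilon}>0\}$ does not hold in any useful sense. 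The surfaces actually used are hyperbola-type leaves bending back toward $\Gamma$, e.g.\ level sets of $d_g(x,\Gamma)+\sqrt{t^2+a^2}$, which stay inside the diamond, are non-characteristic, and exhaust $K$ as $a\downarrow0$. Second, your framing via strong pseudoconvexity cannot give the sharp region: purely pseudoconvexity-based Carleman arguments (Robbiano, H\"ormander) only yield uniqueness in $\{d_g(x,\Gamma)\le c\,(T-|t|)\}$ with $c<1$. The whole point of Tataru's theorem is that, the coefficients being independent of $t$, the pseudoconvexity condition in the non-analytic (spatial) directions is vacuous — the spatial part of the symbol is elliptic — so unique continuation holds across \emph{every} non-characteristic surface, and it is precisely this weaker requirement that lets the sweeping surfaces degenerate to the characteristic boundary of $K$. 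With these corrections your outline becomes the proof in \cite{Matti}, which is exactly what the paper appeals to.
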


We omit the proof and refer to \cite[Theorem 3.16]{Matti}. An introduction to the concept of unique continuation for the wave operator and its applications can be found in \cite{LaurentLeautaud}. Applying Theorem \ref{uniq_con_nd} on $M \setminus \Omega$, where $\Omega \subset M$ is a small open set with smooth boundary satisfying $\partial \Omega \cap \partial M = \emptyset$, allows us to compare unique continuation to finite speed of propagation, see Figure \ref{nd pic}. Clearly these two results are genuinely different in the higher dimensional case, in contrast to the one dimensional case. (Recall that in the dimension one, finite speed of propagation and unique continuation differ only by interchanging space and time.)

\begin{figure}%
	\centering
	\subfloat[\centering Finite speed of the wave propagation]{{\includegraphics[width=6.2cm]{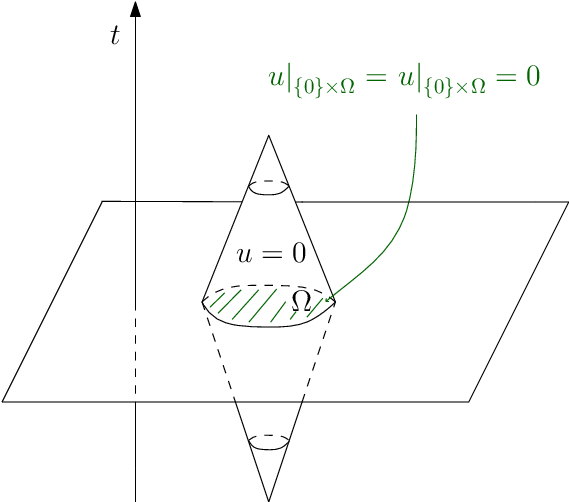} }}%
	\qquad
	\subfloat[\centering Unique continuation]{{\includegraphics[width=6.2cm]{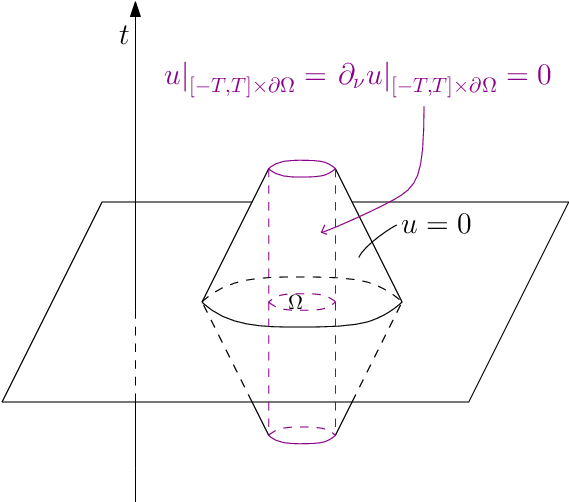} }}%
	\caption{$n + 1$ dimensional case}%
	\label{nd pic}%
\end{figure}

Let $\Gamma\subset \partial M$ be an open set. We define the domain of influence
\begin{equation*}
	M(\Gamma,T) = \{x\in M: d_g(x,\Gamma)\leq T\}
\end{equation*}
and use the identification
\begin{equation*}
	L^2(M(\Gamma,T)) = \{\phi\in L^2(M): \supp(\phi)\subset M(\Gamma,T)\}.
\end{equation*}

Similarly, as we derived Lemma \ref{app con} from Theorem \ref{uniq_con_1d}, Theorem \ref{uniq_con_nd} gives us the following approximate controlability:

\begin{lemma}\label{app con nd}
	Let $T>0$, then the set 
	\begin{equation*}
		\mathcal{B}(T, \Gamma) = \{u^f(T,\cdot): f\in C_0^\infty((0,T)\times \Gamma)\}
	\end{equation*}
	is a dense subset of $L^2(M(\Gamma,T))$.
\end{lemma}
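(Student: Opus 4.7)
The plan is to mirror the proof of Lemma \ref{app con} in the geometric setting, using Theorem \ref{uniq_con_nd} in place of Theorem \ref{uniq_con_1d}. First I would reduce density of $\mathcal{B}(T,\Gamma)$ in $L^2(M(\Gamma,T))$ to showing that its orthogonal complement inside $L^2(M(\Gamma,T))$ is trivial. So I would pick $h \in L^2(M(\Gamma,T))$ with $(h,u^f(T,\cdot))_{L^2(M)}=0$ for all $f\in C_0^\infty((0,T)\times\Gamma)$, and introduce the auxiliary backward solution
\[
\begin{cases}
(\partial_t^2 - \Delta_g + q)\omega = 0 & \text{on } (0,T)\times M,\\
\omega|_{\R\times\partial M} = 0,\\
\omega|_{t=T} = 0,\quad \partial_t\omega|_{t=T} = h.
\end{cases}
\]

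Next I would evaluate $0 = \int_0^T\!\int_M\bigl((Pu^f)\omega - u^f(P\omega)\bigr)\,dV_g\,dt$ by Green's identity: the potential terms cancel, the spatial boundary terms collapse via $\omega|_{\partial M}=0$ and $\supp f \subset (0,T)\times\Gamma$ to $\int_0^T\!\int_\Gamma f\,\partial_\nu\omega\, dS_g\,dt$, and the temporal boundary terms at $t=0$ vanish by the initial conditions on $u^f$, while those at $t=T$ collapse to $-\int_M u^f(T,\cdot)\,h\,dV_g$, which vanishes by hypothesis. Letting $f$ range over $C_0^\infty((0,T)\times\Gamma)$ gives $\partial_\nu\omega = 0$ on $(0,T)\times\Gamma$. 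Then I would extend $\omega$ oddly across $t=T$ by setting $\tilde\omega(t,x) = -\omega(2T-t,x)$ for $t\in(T,2T)$, producing a solution of the wave equation on $(0,2T)\times M$ with $\tilde\omega|_{\partial M}=0$ and $\partial_\nu\tilde\omega = 0$ on the whole of $(0,2T)\times\Gamma$. Translating time by $v(t,x) = \tilde\omega(T+t,x)$ puts me in the exact framework of Theorem \ref{uniq_con_nd}: $v$ is a $C^2$ solution on $(-T,T)\times M$ whose Cauchy data on $(-T,T)\times\Gamma$ vanishes. The theorem then forces $v \equiv 0$ on $\{(t,x) : d_g(x,\Gamma)\le T - |t|\}$, and reading $\partial_t v$ at $t=0$ (where $v$ vanishes in a full $(t,x)$-neighborhood for $x$ in the interior of $M(\Gamma,T)$) yields $h(x) = \partial_t v(0,x) = 0$ for such $x$, hence $h=0$ in $L^2(M(\Gamma,T))$.

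The main obstacle I anticipate is verifying that the odd extension $\tilde\omega$ is genuinely $C^2$ across $t=T$, since Theorem \ref{uniq_con_nd} is formulated for $C^2$ solutions. This is where one uses the wave equation itself: from $\omega(T,\cdot)=0$ one gets $\partial_t^2\omega(T,\cdot) = \Delta_g\omega(T,\cdot) - q\,\omega(T,\cdot)=0$, which together with $\partial_t^0$ matching across $t=T$ exactly supplies the $C^2$-compatibility of the odd reflection. A secondary technical point is setting up the backward problem for $\omega$ with merely $L^2$ Cauchy datum $h$; one can handle this by first proving the result for $h$ in a dense smooth subspace (so that $\omega$ is genuinely $C^2$ up to $t=T$) and then pass to a limit, or equivalently by invoking the standard well-posedness of the mixed problem in the Hilbert-space energy scale and running the orthogonality argument there.
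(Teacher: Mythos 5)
Your proposal is correct and follows exactly the route the paper intends: the paper omits the proof, stating that Lemma \ref{app con nd} is derived from Theorem \ref{uniq_con_nd} just as Lemma \ref{app con} was derived from Theorem \ref{uniq_con_1d}, and your argument (orthogonal complement, backward solution with data $(0,h)$ at $t=T$, Green's identity to get $\partial_\nu\omega=0$ on $(0,T)\times\Gamma$, odd reflection across $t=T$, unique continuation, read off $h=\partial_t\omega|_{t=T}=0$ on $M(\Gamma,T)$) is precisely that transposition-of-unique-continuation argument. Your added remarks on the $C^2$-compatibility of the odd reflection and on handling merely $L^2$ data $h$ by density address regularity points the paper glosses over even in the one-dimensional case, and do not change the approach.
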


Lemma \ref{lem_nonvanishing} can be generalized for the multidimensional case:

\begin{lemma}\label{lem_nonvanishing_gen}
	Let $T>0$, then for any point $x_0$ in the interior of $M(\Gamma, T)$ there is $f\in C_0^\infty ((0, T)\times \Gamma)$ such that $u^f(T,x_0)\neq 0$.
\end{lemma}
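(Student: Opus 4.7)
My plan is to follow the strategy hinted at in the paper: build $f$ from a geometric optics (WKB) ansatz so that the resulting wave packet enters through $\Gamma$, travels along a minimizing geodesic, and focuses at $x_0$ at time $T$. Since $x_0$ lies in the interior of $M(\Gamma,T)$, we have $d_g(x_0,\Gamma)=T_0<T$. Pick $y_0\in\Gamma$ realizing this distance and a unit-speed minimizing geodesic $\gamma:[0,T_0]\to M$ with $\gamma(0)=y_0$, $\gamma(T_0)=x_0$. Generically $\gamma$ meets $\partial M$ only at $y_0$, transversally, and contains no conjugate points of $y_0$; if the direct choice fails, perturb $y_0$ inside $\Gamma$ using openness of the interior of $M(\Gamma,T)$.

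Working in boundary normal (Fermi) coordinates in a tubular neighborhood of $\gamma$, I would solve the eikonal equation $(\partial_t\psi)^2=|\nabla_g\psi|_g^2$ with boundary condition $\psi(t,y)=t-(T-T_0)$ for $y\in\Gamma$ near $y_0$. The associated null bicharacteristics are geodesics emanating from $\Gamma$ launched at the time the characteristic crosses the boundary; the one along $\gamma$ reaches $(T,x_0)$. Then I would construct the standard WKB series
\begin{equation*}
v_\tau(t,x)=\chi(t,x)\,e^{i\tau\psi(t,x)}\sum_{j=0}^{N}\tau^{-j}a_j(t,x),
\end{equation*}
where $\chi\in C_0^\infty((0,T)\times U)$ is a cutoff to a thin tube $U$ around $\gamma$, and the amplitudes $a_j$ are determined successively from the transport equations along the bicharacteristic flow, with $a_0$ normalized so that $a_0(T,x_0)=1$.

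Define $f_\tau:=v_\tau|_{(0,T)\times\partial M}$; by construction $f_\tau\in C_0^\infty((0,T)\times\Gamma)$. Let $w_\tau:=u^{f_\tau}-v_\tau$, which satisfies \eqref{main_eq_gen} with zero boundary and initial data and source $F_\tau:=-(\partial_t^2-\Delta_g+q)v_\tau$. By the WKB construction $\|F_\tau\|_{H^k((0,T)\times M)}=O(\tau^{-N+k})$ for any fixed $k$, provided $N$ is taken large enough. Standard energy estimates for the wave equation then give $\|w_\tau(T,\cdot)\|_{H^{k}(M)}=O(\tau^{-N+k})$, and Sobolev embedding yields $|w_\tau(T,x_0)|=O(\tau^{-N+k})$ once $k>n/2$. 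Consequently,
\begin{equation*}
u^{f_\tau}(T,x_0)=v_\tau(T,x_0)+w_\tau(T,x_0)=e^{i\tau\psi(T,x_0)}\bigl(1+O(\tau^{-1})\bigr),
\end{equation*}
which is nonzero for $\tau$ sufficiently large, proving the lemma.

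The principal difficulty is analytic/geometric rather than algebraic: ensuring that the eikonal equation admits a smooth solution throughout the tube of $\gamma$ up to time $T$, i.e.\ avoiding caustics and tangential rays at $\partial M$. This is handled by choosing $y_0$ so that the exiting geodesic is strictly interior-transversal to $\Gamma$ and by localizing the ansatz to a small enough neighborhood so that no conjugate point of $y_0$ appears along $\gamma|_{[0,T_0]}$; both conditions can be arranged because $x_0$ was assumed to lie in the open interior of $M(\Gamma,T)$. The remaining steps — writing down the transport equations, running the standard inhomogeneous wave energy estimate, and invoking Sobolev embedding — are routine.
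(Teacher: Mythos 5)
Your overall strategy --- a WKB ansatz launched from $\Gamma$, its boundary trace used as the source, and the remainder controlled by higher-order energy estimates plus Sobolev embedding --- is exactly the route the paper intends: it carries out the construction in flat Minkowski space in Section \ref{sec_go} and then states that Lemma \ref{lem_nonvanishing_gen} follows ``in a similar manner'', with $H^k$ estimates, $k>n/2$. The genuine gap in your write-up is the caustic issue, which is precisely the nontrivial part of transplanting the flat construction to a Riemannian manifold. With boundary phase data $\psi(t,y)=t-(T-T_0)$, constant in $y\in\Gamma$, the characteristics are the inward boundary-normal geodesics, so the caustics of your phase are the \emph{focal points of $\partial M$}, not the conjugate points of $y_0$ that you propose to avoid. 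Concretely, let $M$ be the closed Euclidean unit disk, $\Gamma$ any boundary arc, $x_0$ the center and $T>1$: then $x_0$ lies in the interior of $M(\Gamma,T)$, the metric is flat so there are no conjugate points at all, yet every boundary-normal geodesic focuses at $x_0$, so the eikonal/transport system degenerates exactly at the point where you must evaluate $a_0$; perturbing $y_0$ inside $\Gamma$ or shrinking the tube does not help, since the focal point sits on the central ray itself. (This configuration is exactly the one arising in the uniqueness proof, where $\Gamma$ is a small neighbourhood of a nearest boundary point.) The standard repairs are Gaussian beams (complex phase with positive-definite imaginary Hessian, which do not degenerate at caustics), or a phase adapted to a single geodesic, e.g.\ a point-source phase based at $y_0$, together with an argument handling a possible conjugate point exactly at $x_0$; either way this step needs an actual proof, not a genericity claim.

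Two further points. First, writing the ansatz as an external cutoff $\chi(t,x)$ times $e^{i\tau\psi}\sum_j\tau^{-j}a_j$ ruins the error estimate: where $\nabla\chi\neq 0$ the residual contains terms of size $\mathcal O(\tau)$, not $\mathcal O(\tau^{-N})$. The localization must be carried by the amplitudes themselves, i.e.\ compactly supported data for the transport equations propagated along the flow (this is what the paper does with $a_0=\chi(t-x^1)\eta(x')$), with any additional cutoff identically $1$ on their support; that is also what guarantees $f_\tau\in C_0^\infty((0,T)\times\Gamma)$ and the vanishing of the initial data. Second, the inference $d_g(x_0,\Gamma)<T$ from ``$x_0$ interior to $M(\Gamma,T)$'' is not literally valid (the disk with $T=1$: the center is interior to $M(\Gamma,1)=M$ but at distance exactly $1$), and by finite speed of propagation the conclusion genuinely fails at distance exactly $T$, so you are in effect --- and reasonably --- reading the hypothesis as $d_g(x_0,\Gamma)<T$; relatedly, for a non-convex boundary the minimizing path from $x_0$ to $\Gamma$ may touch $\partial M$ and fail to be a smooth interior geodesic, which also needs to be addressed rather than assumed away.
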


\subsection{Solution to the inverse problem}

Let $T>0$. For functions $f$, $h\in C_0^\infty(\R^+ \times \partial M)$, we define
\begin{equation*}
	W_{f,h}(t,s) = (u^f(t,\cdot), u^h(s,\cdot))_{L^2(M)}.
\end{equation*}
The following lemma is the higher dimensional analogue of Lemma \ref{scalar lec2}.
\begin{lemma}\label{scalr_prod_nd}
	Let $f$, $h\in C_0^\infty(\R^+ \times \partial M)$. The operator $\Lambda$ determines $W_{f,h}(t,t)$ for all $t > 0$.
\end{lemma}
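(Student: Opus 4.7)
The plan is to mimic the proof of Lemma \ref{scalar lec2} verbatim, with the only substantive upgrade being that the one-dimensional integration by parts must be replaced by Green's second identity on the Riemannian manifold $(M,g)$. First, I would apply $\partial_t^2 - \partial_s^2$ to $W_{f,h}(t,s)$ and use \eqref{main_eq_gen} to substitute $\partial_t^2 u^f = \Delta_g u^f - q u^f$ and $\partial_s^2 u^h = \Delta_g u^h - q u^h$. The two contributions involving $q$ cancel because multiplication by $q$ is self-adjoint, leaving
\begin{equation*}
	(\partial_t^2 - \partial_s^2) W_{f,h}(t,s) = (\Delta_g u^f(t,\cdot), u^h(s,\cdot))_{L^2(M)} - (u^f(t,\cdot), \Delta_g u^h(s,\cdot))_{L^2(M)}.
\end{equation*}

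Next I would invoke Green's second identity on $(M,g)$ to rewrite the right-hand side as a boundary integral, and then insert the Dirichlet data $u^f|_{\partial M} = f$, $u^h|_{\partial M} = h$ together with the definition of the Dirichlet-to-Neumann map. This yields
\begin{equation*}
	(\partial_t^2 - \partial_s^2) W_{f,h}(t,s) = F(t,s), \quad F(t,s) := \int_{\partial M} \bigl[ \Lambda f(t,\cdot)\, h(s,\cdot) - f(t,\cdot)\, \Lambda h(s,\cdot) \bigr] dS_g,
\end{equation*}
where $dS_g$ is the induced surface measure on $\partial M$. The source $F$ depends only on $\Lambda$, $f$, and $h$, and is therefore known from the data.

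Finally, $W_{f,h}$ satisfies the $1+1$ dimensional wave equation $(\partial_t^2 - \partial_s^2) W_{f,h} = F$ on $\R^+ \times \R^+$ with vanishing Cauchy data at $t=0$, inherited from $u^f|_{t=0} = \partial_t u^f|_{t=0} = 0$, and analogously $W_{f,h}(t,0) = \partial_s W_{f,h}(t,0) = 0$ from the initial conditions on $u^h$. Exactly as in Lemma \ref{scalar lec2}, d'Alembert's formula (or equivalently finite speed of propagation for the $1+1$-dimensional wave operator) recovers $W_{f,h}$, and in particular its diagonal values $W_{f,h}(t,t)$, from $F$ alone.

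I expect no genuine obstacle here: the argument is a direct geometric lift of the one-dimensional proof. The only point requiring care is bookkeeping, namely verifying that Green's identity delivers the boundary integrand in precisely the symmetric form $\Lambda f\cdot h - f \cdot \Lambda h$ so that the data enters only through $\Lambda$, and that smoothness of $u^f, u^h$ up to $\partial M$ is sufficient for the identity to apply. Both are straightforward given the standing hypothesis $f,h \in C_0^\infty(\R^+ \times \partial M)$ and the smoothness of $(M,g)$ and $q$.
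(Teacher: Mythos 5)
Your proposal is correct and follows essentially the same route as the paper: apply $\partial_t^2-\partial_s^2$ to $W_{f,h}$, cancel the $q$ terms, use Green's identity to express the source as $(\Lambda f,h)_{L^2(\partial M)}-(f,\Lambda h)_{L^2(\partial M)}$, and recover $W_{f,h}(t,t)$ from the resulting $1+1$ dimensional wave equation with vanishing Cauchy data at $t=0$. No gaps.
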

\begin{proof}
	We write
	\begin{multline*}
		(\partial_t^2 - \partial_s^2) W_{f,h}(t,s) = \left( \Delta_g u^{f}(t,\cdot) - q u^f(t,\cdot), u^{h}(s,\cdot) \right)_{L^2(M)}\\
		- \left( u^{f}(t,\cdot), \Delta_g u^{h}(s,\cdot) - q u^h(s,\cdot) \right)_{L^2(M)}.
	\end{multline*}
	Further, by Green's identity, we obtain
	\begin{multline*}
		(\partial_t^2 - \partial_s^2) W_{f,h}(t,s)  = \int_{\partial M} \partial_\nu u^f(t,x) u^h(s,x) dS(x) - \int_{\partial M}  u^f(t,x) \partial_\nu u^h(s,x) dS(x)\\
		= \left( \Lambda f(t,\cdot), h(s,\cdot)\right)_{L^2(\partial M)} - \left( f(t,\cdot),  \Lambda h(s,\cdot)\right)_{L^2(\partial M)}.
	\end{multline*}
	Let us denote the right-hand side by $F(t,s)$, then $W_{f,h}$ is the solution of the equation
	\begin{equation*}
		\begin{cases}
			(\partial_t^2 - \partial_s^2) W_{f,h}(t,s) = F(t,s) & \text{on } (0,T)\times (0, T),\\
			W_{f,h}(0,s) = \partial_tW_{f,h}(0,s) = 0.
		\end{cases}
	\end{equation*}
	Hence, $W_{f,h}$ is determined by $F$, and consequently, it is determined by $\Lambda$. 
\end{proof}

We have the analogue of Lemma \ref{lem_of_norm_conv_of_sol_1d}.

\begin{lemma}\label{eq_of_norm_conv_of_sol}
Assume that $\Lambda_1 = \Lambda_2$.
	Let $0<s<T$, let $\Gamma \subset \partial M$ be open and let $\{f_j\} \subset C_0^\infty((T-s, T)\times \Gamma)$ be a sequence such that
	\begin{equation*}
		u_1^{f_j}(T,\cdot) \rightarrow 1_{M(\Gamma,s)}u_1^f(T,\cdot) \qquad \text{in } L^2(M).
	\end{equation*}
	Then
	\begin{equation*}
		u_2^{f_j}(T,\cdot) \rightarrow 1_{M(\Gamma,s)}u_2^f(T,\cdot) \qquad \text{in } L^2(M).
	\end{equation*}
\end{lemma}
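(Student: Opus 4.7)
The plan is to mirror the argument of Lemma \ref{lem_of_norm_conv_of_sol_1d} line by line, replacing the interval $(0,s) \subset (0,1)$ by the domain of influence $M(\Gamma,s) \subset M$, and invoking the manifold versions of the two key ingredients: the scalar-product identity Lemma \ref{scalr_prod_nd} and the approximate controllability Lemma \ref{app con nd}. Throughout, $\|\cdot\|$ denotes the $L^2(M)$ norm, and $f$ is the fixed function in $C_0^\infty(\mathbb{R}^+ \times \partial M)$ implicit in the hypothesis.

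First I would establish, for $k=1,2$ and every $\tilde f \in C_0^\infty((T-s,T) \times \Gamma)$, the Pythagorean identity
\begin{equation*}
\|u_k^{\tilde f}(T,\cdot) - u_k^f(T,\cdot)\|^2
= \|u_k^{\tilde f}(T,\cdot) - 1_{M(\Gamma,s)} u_k^f(T,\cdot)\|^2
+ \|(1_{M(\Gamma,s)} - 1) u_k^f(T,\cdot)\|^2,
\end{equation*}
in exact parallel with \eqref{k1d}. The disjointness of supports required for the cross term to vanish follows from finite speed of propagation applied to $u_k^{\tilde f}$: since $\tilde f$ is supported in $(T-s,T) \times \Gamma$ and the initial data are zero, $u_k^{\tilde f}(T,\cdot)$ is supported in $M(\Gamma,s)$, while $(1_{M(\Gamma,s)}-1) u_k^f(T,\cdot)$ vanishes on $M(\Gamma,s)$.

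Next I would take the infimum over $\tilde f$ on both sides. By a translation in time (permissible because the coefficients in \eqref{main_eq_gen} are time-independent), Lemma \ref{app con nd} applied at time $s$ shows that the set $\{u_k^{\tilde f}(T,\cdot) : \tilde f \in C_0^\infty((T-s,T) \times \Gamma)\}$ is dense in $L^2(M(\Gamma,s))$. Consequently the first summand on the right can be driven to zero, giving
\begin{equation*}
\inf_{\tilde f} \|u_k^{\tilde f}(T,\cdot) - u_k^f(T,\cdot)\|^2
= \|(1_{M(\Gamma,s)} - 1) u_k^f(T,\cdot)\|^2, \qquad k=1,2.
\end{equation*}
Applying Lemma \ref{scalr_prod_nd} with source $\tilde f - f$, and using linearity of the direct problem, yields $\|u_1^{\tilde f}(T,\cdot) - u_1^f(T,\cdot)\| = \|u_2^{\tilde f}(T,\cdot) - u_2^f(T,\cdot)\|$ for every admissible $\tilde f$; taking infima, the quantities $\|(1_{M(\Gamma,s)}-1)u_k^f(T,\cdot)\|$ coincide for $k=1$ and $k=2$.

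Finally, the hypothesis combined with the Pythagorean identity for $k=1$ gives $\|u_1^{f_j}(T,\cdot) - u_1^f(T,\cdot)\|^2 \to \|(1_{M(\Gamma,s)}-1)u_1^f(T,\cdot)\|^2$; transporting this equality across by Lemma \ref{scalr_prod_nd} yields $\|u_2^{f_j}(T,\cdot) - u_2^f(T,\cdot)\|^2 \to \|(1_{M(\Gamma,s)}-1)u_2^f(T,\cdot)\|^2$, and substituting back into the Pythagorean identity for $k=2$ produces the desired $\|u_2^{f_j}(T,\cdot) - 1_{M(\Gamma,s)} u_2^f(T,\cdot)\|^2 \to 0$. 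I expect no serious obstacle; the only minor point of care is the time translation needed to recast Lemma \ref{app con nd} (stated for sources in $(0,T) \times \Gamma$) as density for sources in $(T-s,T) \times \Gamma$, which uses time-translation invariance of the wave operator.
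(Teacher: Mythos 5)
Your proof is correct and follows exactly the route the paper intends: the paper omits the proof precisely because it coincides with the one-dimensional argument of Lemma \ref{lem_of_norm_conv_of_sol_1d}, which you reproduce faithfully with $(0,s)$ replaced by $M(\Gamma,s)$ and Lemmas \ref{scalr_prod_nd} and \ref{app con nd} in place of their 1D counterparts. Your explicit remark about the time translation needed to apply Lemma \ref{app con nd} to sources supported in $(T-s,T)\times\Gamma$ is a welcome detail the paper leaves implicit.
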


We omit the proof as it coincides with that in the one dimensional case. As before, we have also the following corollary 

\begin{corollary}\label{cor_scal_prod_sol_dominf_nd}
Assume that $\Lambda_1 = \Lambda_2$.
	Let $0< s< T$, $t > 0$ and let $\Gamma\subset \partial M$ be open, then for any $f$, $h\in C_0^\infty(\R^+\times \partial M)$, it follows
	\begin{equation*}
		\left(1_{M(\Gamma,s)}u_1^f(T,\cdot), u_1^h(t,\cdot)\right)_{L^2(M)} = \left(1_{M(\Gamma,s)}u_1^f(T,\cdot), u_1^h(t,\cdot)\right)_{L^2(M)}.
	\end{equation*}
\end{corollary}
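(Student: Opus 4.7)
The plan is to imitate the one-dimensional argument of Corollary \ref{cor_scalar_prod_supp} verbatim, with approximate controllability and the finite speed of propagation replaced by their geometric analogues, Lemma \ref{app con nd} and the associated domain-of-influence identification. First, by Lemma \ref{app con nd} applied to the manifold $M$ with control on the boundary piece $\Gamma$ over the time interval $(T-s, T)$ (shifted so that the time of observation is $T$), there exists a sequence $\{f_j\} \subset C_0^\infty((T-s, T)\times \Gamma)$ such that
\begin{equation*}
u_1^{f_j}(T,\cdot) \to 1_{M(\Gamma, s)} u_1^f(T,\cdot) \quad \text{in } L^2(M).
\end{equation*}
Here I use that the waves produced by such controls are supported in $M(\Gamma, s)$ at time $T$, which is the content of the finite speed of propagation theorem.

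Next, invoking Lemma \ref{eq_of_norm_conv_of_sol} with the sequence $\{f_j\}$ above and the hypothesis $\Lambda_1 = \Lambda_2$, the same approximation property holds with $q_1$ replaced by $q_2$, namely
\begin{equation*}
u_2^{f_j}(T,\cdot) \to 1_{M(\Gamma, s)} u_2^f(T,\cdot) \quad \text{in } L^2(M).
\end{equation*}

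The third step is to apply Lemma \ref{scalr_prod_nd}. A small but essential observation is that, although Lemma \ref{scalr_prod_nd} is stated for $W_{f,h}(t,t)$, its proof produces a forward wave equation in the variables $(t,s)$ whose data is determined by $\Lambda$ alone, so in fact $W_{f,h}(T,t)$ is determined by $\Lambda$ for all $T,t>0$. Consequently,
\begin{equation*}
\bigl(u_1^{f_j}(T,\cdot), u_1^h(t,\cdot)\bigr)_{L^2(M)} = \bigl(u_2^{f_j}(T,\cdot), u_2^h(t,\cdot)\bigr)_{L^2(M)}
\end{equation*}
for every $j$, since both sides equal $W_{f_j,h}(T,t)$ computed with the corresponding potential.

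Finally, passing to the limit $j \to \infty$ on both sides using the two convergences established above (and the continuity of the $L^2$ inner product in its first argument, with the fixed $L^2$ function $u_k^h(t,\cdot)$ in the second slot), one obtains
\begin{equation*}
\bigl(1_{M(\Gamma, s)} u_1^f(T,\cdot), u_1^h(t,\cdot)\bigr)_{L^2(M)} = \bigl(1_{M(\Gamma, s)} u_2^f(T,\cdot), u_2^h(t,\cdot)\bigr)_{L^2(M)},
\end{equation*}
which is the claimed identity. There is no genuine obstacle here; the only subtlety to flag is the off-diagonal use of $W_{f,h}$, which the proof of Lemma \ref{scalr_prod_nd} already supports even though the statement emphasizes the diagonal.
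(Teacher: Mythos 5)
Your proof is correct and follows exactly the route the paper intends (it omits the argument with ``as before''): approximate controllability (Lemma \ref{app con nd}), transfer of the convergence via Lemma \ref{eq_of_norm_conv_of_sol}, the inner-product identity from Lemma \ref{scalr_prod_nd}, and passage to the limit, just as in Corollary \ref{cor_scalar_prod_supp}. Your remark about the off-diagonal values $W_{f,h}(T,t)$ is a valid and pertinent observation, since the proof of Lemma \ref{scalr_prod_nd} determines $W_{f,h}$ on all of $(0,T)\times(0,T)$, not only on the diagonal, which is exactly what the statement with two different times requires (note also that the second member of the displayed identity should read $u_2$, correcting an evident typo in the statement).
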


Now we deviate from the one dimensional proof.

\begin{corollary}\label{cor_scal_prod_sol_dominf_second}
Assume that $\Lambda_1 = \Lambda_2$.
	Let $s, \tilde s \in (0,T)$, let $t > 0$ and consider two open $\Gamma$, $\tilde{\Gamma}\subset \partial M$. Then we have for any $f$, $h\in C_0^\infty(\R^+\times \partial M)$
	\begin{multline*}
		\left(1_{M(\Gamma,s)} 1_{M(\tilde{\Gamma},\tilde{s})} u_1^f(T,\cdot), u_1^h(t,\cdot)\right)_{L^2(M)} \\
		= \left(1_{M(\Gamma,s)}1_{M(\tilde{\Gamma},\tilde{s})}u_1^f(T,\cdot), u_1^h(t,\cdot)\right)_{L^2(M)}.
	\end{multline*}
\end{corollary}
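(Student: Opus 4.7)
The idea is to iterate the approximation argument that was used to deduce Corollary \ref{cor_scal_prod_sol_dominf_nd} from Corollary \ref{cor_scal_prod_sol_dominf_nd}'s one-factor analogue: one of the two cut-offs will again be produced as an $L^2$-limit of wave solutions, reducing the two-indicator identity to the one-indicator identity already at hand. (I will read the right-hand side in the statement as involving $u_2^f$ and $u_2^h$, which is the only way the identity can be non-trivial.)

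First, by approximate controllability (Lemma \ref{app con nd}, applied after a time-translation of the source interval from $(0,\tilde s)$ to $(T-\tilde s,T)$), I choose a sequence $\{f_j\}\subset C_0^\infty((T-\tilde s,T)\times\tilde\Gamma)$ such that
\begin{equation*}
u_1^{f_j}(T,\cdot)\longrightarrow 1_{M(\tilde\Gamma,\tilde s)}\,u_1^f(T,\cdot)\qquad\text{in }L^2(M).
\end{equation*}
Then Lemma \ref{eq_of_norm_conv_of_sol}, with $(s,\Gamma)$ replaced by $(\tilde s,\tilde\Gamma)$, transfers this convergence to the second manifold's solutions:
\begin{equation*}
u_2^{f_j}(T,\cdot)\longrightarrow 1_{M(\tilde\Gamma,\tilde s)}\,u_2^f(T,\cdot)\qquad\text{in }L^2(M).
\end{equation*}
Next, for each $j$, Corollary \ref{cor_scal_prod_sol_dominf_nd} applied to the source $f_j$ and the set $M(\Gamma,s)$ gives
\begin{equation*}
\bigl(1_{M(\Gamma,s)}u_1^{f_j}(T,\cdot),u_1^h(t,\cdot)\bigr)_{L^2(M)} = \bigl(1_{M(\Gamma,s)}u_2^{f_j}(T,\cdot),u_2^h(t,\cdot)\bigr)_{L^2(M)}.
\end{equation*}
To finish, I pass to the limit $j\to\infty$ on both sides, using that multiplication by $1_{M(\Gamma,s)}$ is a bounded operator on $L^2(M)$ and that the $L^2$ inner product is continuous in each argument.

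I do not expect a real obstacle. The only point that deserves care is that the approximate controllability from Lemma \ref{app con nd} is stated for sources supported in $(0,T)\times\Gamma$ producing density in $L^2(M(\Gamma,T))$, whereas I need sources supported in $(T-\tilde s,T)\times\tilde\Gamma$ producing density in $L^2(M(\tilde\Gamma,\tilde s))$; this is immediate from time-translation invariance of the wave equation with zero initial data, but should be noted. Everything else is a clean two-line limit passage that relies only on boundedness of indicator multiplication.
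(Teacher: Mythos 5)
Your argument is correct and is essentially the paper's proof: the paper fixes a sequence of controlled solutions approximating $1_{M(\Gamma,s)}u_k^f(T,\cdot)$ and applies the one-cutoff identity of Corollary \ref{cor_scal_prod_sol_dominf_nd} with the other indicator $1_{M(\tilde{\Gamma},\tilde{s})}$, while you swap the roles of the two cutoffs, which is an immaterial difference. Your reading of the right-hand side as involving $u_2^f$, $u_2^h$ (a typo in the statement) and your remark that Lemma \ref{app con nd} is used in its time-translated form, with sources in $C_0^\infty((T-\tilde s,T)\times\tilde\Gamma)$ giving density in $L^2(M(\tilde\Gamma,\tilde s))$, are both correct and match what the paper uses implicitly.
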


\begin{proof}
	By Lemmas \ref{app con nd} and \ref{eq_of_norm_conv_of_sol}, there is a sequence $\{f_j\} \subset C_0^\infty ((T-s, T)\times \Gamma)$ such that
	\begin{equation*}
		u_1^{f_j}(T,\cdot) \rightarrow 1_{M(\Gamma,s)} u_1^{f}(T,\cdot) \quad \text{and} \quad
		u_2^{f_j}(T,\cdot) \rightarrow 1_{M(\Gamma,s)} u_2^{f}(T,\cdot).
	\end{equation*}
	Therefore, using Corollary \ref{cor_scal_prod_sol_dominf_nd}, we obtain
	\begin{align*}
		\left(1_{M(\Gamma,s)} 1_{M(\tilde{\Gamma},\tilde{s})} u_1^f(T,\cdot),u_1^h(t,\cdot)\right)_{L^2(M)} &\quad =  \lim_{j\rightarrow \infty} \left(1_{M(\tilde{\Gamma},\tilde{s})}u_1^{f_j}(T,\cdot), u_1^h(t,\cdot)\right)_{L^2(M)}\\
		& \quad =\lim_{j\rightarrow \infty} \left(1_{M(\tilde{\Gamma},\tilde{s})} u_2^{f_j}(T,\cdot), u_2^h(t,\cdot)\right)_{L^2(M)}\\
		& \quad =\left(1_{M(\Gamma,s)} 1_{M(\tilde{\Gamma},\tilde{s})} u_1^f(T,\cdot), u_1^h(t,\cdot)\right)_{L^2(M)} .
	\end{align*}
\end{proof}

\begin{theorem}
	If $\Lambda_1 = \Lambda_2$, then $q_1 = q_2$.
\end{theorem}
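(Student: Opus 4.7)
The plan is to follow the one-dimensional argument in Theorem~\ref{th_1d_main}, replacing the differentiation of a one-variable integral with a higher-dimensional localisation argument based on iterating Corollary~\ref{cor_scal_prod_sol_dominf_second}. The central identity to establish is that for every $T$ large enough and every $x$ in the interior of $M$,
\begin{equation*}
u_1^f(T,x)\,u_1^h(T,x)=u_2^f(T,x)\,u_2^h(T,x),
\qquad f,h\in C_0^\infty(\R^+\times\partial M).
\end{equation*}
Once this is available, the remainder of the one-dimensional proof carries over essentially verbatim.

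First I would iterate Corollary~\ref{cor_scal_prod_sol_dominf_second} by induction on the number of indicator factors, using Lemmas~\ref{app con nd} and~\ref{eq_of_norm_conv_of_sol} at each step to approximate $u_k^f(T,\cdot)$ in $L^2(M)$ by sequences $u_k^{f_j}(T,\cdot)\to 1_{M(\Gamma_{N+1},s_{N+1})}u_k^f(T,\cdot)$, absorbing the new indicator into one of the factors. This produces
\begin{equation*}
\left(\prod_{i=1}^N 1_{M(\Gamma_i,s_i)}\, u_1^f(T,\cdot),\, u_1^h(T,\cdot)\right)_{L^2(M)} = \left(\prod_{i=1}^N 1_{M(\Gamma_i,s_i)}\, u_2^f(T,\cdot),\, u_2^h(T,\cdot)\right)_{L^2(M)}
\end{equation*}
for all open $\Gamma_i\subset\partial M$ and $s_i>0$. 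Since $1_{A\setminus B}=1_A-1_A 1_B$, the identity extends to finite products of indicator functions of shells $\{s_i-\varepsilon\leq d_g(\cdot,\Gamma_i)\leq s_i+\varepsilon\}$.

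Next I would localise to an interior point $x_0\in M$. Choose $n=\dim M$ small boundary patches $\Gamma_1,\dots,\Gamma_n$ and set $s_i=d_g(x_0,\Gamma_i)$ so that the gradients $\nabla d_g(\cdot,\Gamma_i)|_{x_0}$ are linearly independent. Then
\begin{equation*}
S_\varepsilon=\bigcap_{i=1}^n\{x\in M:|d_g(x,\Gamma_i)-s_i|\leq\varepsilon\}
\end{equation*}
contains $x_0$, is contained in a geodesic ball of radius $O(\varepsilon)$ about $x_0$, and has volume comparable to $\varepsilon^n$. By the previous step the integrals of $u_1^f u_1^h$ and $u_2^f u_2^h$ over $S_\varepsilon$ agree; since the integrands are smooth, dividing by $|S_\varepsilon|$ and letting $\varepsilon\to 0$ yields the pointwise equality displayed above. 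With this identity in hand, the one-dimensional endgame applies: fix $T$ so large that $M(\partial M,T)=M$, use Lemma~\ref{lem_nonvanishing_gen} to pick $f$ with $u_1^f(T,x_0)\neq 0$, define $w(x)=u_2^f(T,x)/u_1^f(T,x)$ locally, check that $w$ is independent of $f$ and smooth, observe that $w^2\equiv 1$ by taking $f=h$, and conclude from $u_1^h=\pm u_2^h$ together with the wave equations that $0=\pm(q_1-q_2)u_2^h(T,\cdot)$; choosing $h$ with $u_2^h(T,x)\neq 0$ gives $q_1=q_2$ on $M^{\mathrm{int}}$, hence on $M$ by continuity.

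The main obstacle is the localisation step: the iterated corollary only provides an integrated identity, and upgrading it to a pointwise one relies on the geometric fact that $n$ boundary-distance level sets $\{d_g(\cdot,\Gamma_i)=s_i\}$ can be arranged to meet transversally at any prescribed interior point. Semigeodesic coordinates based at suitably chosen $\Gamma_i$ make this transparent, though care is needed if $x_0$ lies on the cut locus of some $\Gamma_i$; this is avoided by shrinking and translating the $\Gamma_i$ along $\partial M$ until the distance functions $d_g(\cdot,\Gamma_i)$ are smooth near $x_0$ with independent gradients.
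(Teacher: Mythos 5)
Your overall strategy (integrated identities from Corollaries~\ref{cor_scal_prod_sol_dominf_nd} and~\ref{cor_scal_prod_sol_dominf_second}, a shrinking-set average to get the pointwise identity $u_1^f(T,x_0)u_1^h(T,x_0)=u_2^f(T,x_0)u_2^h(T,x_0)$, then the one-dimensional endgame with Lemma~\ref{lem_nonvanishing_gen}) is exactly the paper's strategy, and the iteration to $N$ indicator factors and the expansion $1_{A\setminus B}=1_A-1_A1_B$ are both sound (the paper in fact only ever needs two factors). The gap is in your localisation step. From transversality of the level sets $\{d_g(\cdot,\Gamma_i)=s_i\}$ at $x_0$ you may conclude that, \emph{near} $x_0$, the set $S_\varepsilon$ has diameter $O(\varepsilon)$ and volume $\sim\varepsilon^n$; but the claim that $S_\varepsilon$ is \emph{contained} in an $O(\varepsilon)$-ball about $x_0$ is unjustified and false in general. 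The $n$ hypersurfaces $\{d_g(\cdot,\Gamma_i)=s_i\}$ can meet at other points of $M$ as well (already in a planar domain two distance spheres generically intersect in two points), so $S_\varepsilon$ may have several components of comparable volume, and $|S_\varepsilon|^{-1}\int_{S_\varepsilon}u_k^fu_k^h$ converges to a weighted average over all of these intersection points, not to the value at $x_0$. Nothing in your construction rules this out, and arranging the $\Gamma_i,s_i$ so that $x_0$ is the \emph{unique} common point is precisely the hard part; the auxiliary issues you mention (cut locus, realizing $n$ linearly independent gradient directions by minimizing geodesics to boundary patches) also need proof rather than the sketch given.

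The paper's localisation is designed to make this uniqueness automatic, with only two sets and no transversality. Take $s=d_g(x_0,\partial M)$, let $y\in\partial M$ realize this distance, let $\Gamma$ be a small neighbourhood of $y$, take $\tilde\Gamma=\partial M$, and set $Z=M(\Gamma,s)\setminus M(\partial M,\tilde s)$ with $\tilde s\uparrow s$ and $\Gamma\to\{y\}$. Since $d_g(x,\partial M)\le d_g(x,y)$ for every $x$, any point surviving in the limit must satisfy $d_g(x,\partial M)=d_g(x,y)=s$; such a point lies at arclength $s$ along the inward normal geodesic from $y$, hence is unique and equals $x_0$. So $Z$ genuinely shrinks to $\{x_0\}$, and dividing the difference of the one- and two-indicator identities by $|Z|$ gives the pointwise identity. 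If you replace your $n$-shell construction by this device (or otherwise prove that your common intersection can be reduced to the single point $x_0$), the rest of your argument goes through.
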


\begin{proof}
	Let $x_0$ be an interior point of $M$ and write $s=d_g(x_0,\partial M)$. We choose $T>s$. Let $y\in \partial M$ be such that
	\begin{equation*}
		d_g(x,y) = s.
	\end{equation*}
	Let $\Gamma \subset \p M$ be a neighbourhood of $y$ and take  $\tilde \Gamma = \partial M$.
 Let also $0 < \tilde{s} < s $ and set
	\begin{equation*}
		Z = M(\Gamma,s) \setminus M(\tilde{\Gamma},\tilde{s}).
	\end{equation*}
	Then, 
	\begin{multline*}
		\frac{1}{|Z|} \left(\left(  1_{M(\Gamma,s)}  u_1^f(T,\cdot), u_1^h(t,\cdot)  \right)_{L^2(M)}  - \left(  1_{M(\tilde{\Gamma},\tilde{s})} 1_{M(\Gamma,s)} u_1^f(T,\cdot),  u_1^h(T,\cdot)  \right)_{L^2(M)} \right)\\
		\rightarrow u_1^f(T,x_0)u_1^h(T,x_0)
	\end{multline*}
	as $\tilde{s} \rightarrow s$ and $\Gamma \to \{y\}$. The same holds for $u_2^f$ and $u_2^h$, so that by Corollaries \ref{cor_scal_prod_sol_dominf_nd} and \ref{cor_scal_prod_sol_dominf_second}, we know that 
	\begin{equation*}
		u_1^f(T,x_0)u_1^h(t,x_0) = u_2^f(T,x_0)u_2^h(t,x_0).
	\end{equation*}
This is the analogue of \eqref{bububu}, and we conclude as in the proof of Theorem \ref{th_1d_main}.
\end{proof}

\section{Geometric optics}\label{sec_go}

In this section, we will construct solutions to 
\begin{align}\label{eq_wave_Min}
&\p_t^2 u - \Delta u + q u = 0, \quad \text{in $(0,T) \times \R^n$},
\end{align}
that concentrate on light rays, that is, lines of the form 
$$
\beta(s) = (s, y + s v), \quad s \in \R,
$$
where $y$ is a point in $\R^n$ and $v$ is a unit vector in $\R^n$.
We write $$S^{n-1} = \{v \in \R^n;\ |v|=1\}$$
for the set of unit vectors, that is, for the unit sphere. 
The name light ray comes from the fact that the tangent vector $\dot \beta = (1, v)$ is light like with respect to the Minkowski metric
\begin{equation}\label{minkowski_g}
g = \begin{pmatrix}
-1 \\
& 1 \\
&& \ddots \\
&&& 1 
\end{pmatrix},
\end{equation}
that is, $g(\dot \beta, \dot \beta) = 0$.

The idea is to find first an approximate solution of the form 
$$
e^{i \sigma \phi(t,x)} (a_0(t,x) + \sigma^{-1} a_1(t,x) + \sigma^{-2} a_2(t,x) + \dots), 
$$
where $\sigma > 0$ is a large parameter, 
and then an actual solution 
$$
u = e^{i \sigma \phi} (a_0 + \dots) + r_\sigma,
$$ 
where the remainder $r_\sigma$ converges to zero as $\sigma \to \infty$.
We will begin with the single term approximation $e^{i \sigma \phi} a_0$ and write $a_0 = a$.

\subsection{Single term ansatz}

To simplify the notation, we write $\Box = \p_t^2 - \Delta$.
The equation $(\Box + q) u = 0$ is equivalent to
\begin{equation}\label{wave_eq_r}
(\Box + q) r_\sigma = - (\Box + q) (e^{i\sigma \phi} a),
\end{equation}
and we want to choose $\phi$ and $a$ so that 
\begin{align}\tag{``C''}
\Box (e^{i\sigma \phi} a) = e^{i\sigma \phi} \Box a.
\end{align}
The rationale is that in this case the absolute value of the right-hand side of (\ref{wave_eq_r}) is independent of $\sigma$, and therefore $r_\sigma$ is at least not blowing up as $\sigma \to \infty$.

It is a simple matter to expand the left-hand side of (``C'') but a useful computational technique is to consider the conjugated wave operator 
$$
e^{-i\sigma \phi} \Box e^{i\sigma \phi}
= e^{-i\sigma \phi} \p_t^2 e^{i\sigma \phi} + \dots
= e^{-i\sigma \phi} \p_t  e^{i\sigma \phi} e^{-i\sigma \phi} \p_t  e^{i\sigma \phi} + \dots.
$$
Now $e^{-i\sigma \phi} \p_t  e^{i\sigma \phi} = \p_t + i \sigma (\p_t\phi)$ and 
$$
(e^{-i\sigma \phi} \p_t e^{i\sigma \phi})^2
= \p_t^2 + 2 i \sigma (\p_t\phi) \p_t - \sigma^2 |\p_t \phi|^2 + i \sigma (\p_t^2 \phi).
$$
Treating the spacial derivatives in the same way we get 
\begin{align}\label{conjugation}
e^{-i\sigma \phi} \Box e^{i\sigma \phi} = 
\Box + i \sigma ( 2 (\p_t\phi) \p_t - 2 (\nabla \phi) \cdot \nabla + \Box \phi) - \sigma^2 (|\p_t \phi|^2 - |\nabla \phi|^2).
\end{align}
Therefore for $a \ne 0$, (``C'') is equivalent to the following two equations
\begin{align}
\tag{E}|\p_t \phi|^2 - |\nabla \phi|^2 = 0,
\\\tag{T}
2 (\p_t\phi) \p_t a - 2 (\nabla \phi) \cdot \nabla a + (\Box \phi) a = 0.
\end{align}

It is natural to normalize $\phi$ so that (E) becomes 
$|\p_t \phi|^2 = |\nabla \phi|^2 = 1$.
There is some freedom when choosing a solution to (E), but for our purposes it suffices to use the linear solution $\phi(t,x) = t + v \cdot x$ where $v \in S^{n-1}$.

To simplify the notation, we may assume after a rotation that $v \cdot x = -x^1$. The functions satisfying $|\nabla \phi| = 1$ are often called distance functions, and the particular choice $x^1$ is of course the signed distance to the plane $x^1 = 0$. 

The transport equation (T) simplifies now to 
$$
\p_t a + \p_{x^1} a = 0.
$$
The solutions to this are of the form $$a(t,x) = \chi(t - x^1) \eta(x')$$ where $x' = (x^2, x^3, \dots, x^n)$.
Taking $\chi \approx \delta$ and $\eta \approx \delta$ we obtain $a$ that concentrates on the light ray $\beta(s) = (s,s,0)$.

Neither $\phi$ nor $a$ depend on $q$ in the above construction. In order to obtain information on $q$, in the context of inverse problems, there are two typical approaches: use the difference of two solutions, corresponding to different potentials, or use a multi-term approximation. We will take the latter approach.

\subsection{Multi-term ansatz}

Let us consider the ansatz,
$$e^{i \sigma \phi} A, \quad A = a_0 + \sigma^{-1} a_1 + \dots + \sigma^{-N} a_N,$$
and choose $\phi$ and $a_0 = a$ as above.
As we are using a more complicated amplitude, we can ask for more than (``C''), namely 
\begin{align*}
(\Box + q) (e^{i\sigma \phi} A) = \mathcal O(\sigma^{-N}), 
\quad \sigma \gg 1. 
\end{align*}
We use the conjugation formula (\ref{conjugation}), to obtain
$$
e^{-i\sigma \phi} (\Box + q) e^{i\sigma \phi} A
= (\Box + q) A + 2i (\p_t + \p_{x^1}) (a_1 + \dots + \sigma^{-N+1} a_N).
$$
This is of order $\sigma^{-N}$ whenever $a_1,\dots,a_N$ solve the transport equations
$$
\p_t a_j + \p_{x^1} a_j - \frac i 2 (\Box + q) a_{j-1} = 0, \quad j=1,\dots,N,
$$
or after the change of variables,
$$
s = \frac{t+x^1} 2, \quad r = \frac{t-x^1} 2,
$$
equivalently $\p_s a_j = \frac i 2 (\Box + q) a_{j-1}$.
Therefore we may choose
	\begin{align}\label{def_aj}
a_j(s,r,x') = \frac i 2 \int_{-r}^s (\Box + q) a_{j-1}(s',r,x') ds'.
	\end{align}
Note that $t=0$ is equivalent to $s = -r$. The choice of the lower limit $-r$ in the integration implies that $a_j = 0$, $j=1,\dots,N$, when $t=0$.

\subsection{Solving for the remainder}

When $\chi \in C_0^\infty(\R)$ and $\eta \in C_0^\infty(\R^{n-1})$, the restrictions of all the amplitudes $a_j$, $j=0,1,\dots,N$, are compactly supported in $(0,T) \times \R^n$.
We recall that the wave equation 
\begin{align*}
\begin{cases}
\Box u + q u = F & \text{in $(0,T) \times \R^n$},
\\
u|_{t=0} = \p_t u|_{t=0} = 0
\end{cases}
\end{align*}
has a unique solution $u$ satisfying 
$$
\norm{u}_{C(0,T; H^1(\R^n))} + 
\norm{u}_{C^1(0,T; L^2(\R^n))} \le C \norm{F}_{L^2((0,T) \times \R^n)},
$$
see e.g. \cite[Theorem 7.6]{Evans1998}.
We solve 
\begin{align*}
\begin{cases}
\Box r_\sigma + q r_\sigma = -(\Box + q) (e^{i\sigma \phi} A) & \text{in $(0,T) \times \R^n$},
\\
r_\sigma|_{t=0} = \p_t r_\sigma|_{t=0} = 0.
\end{cases}
\end{align*}
As the right-hand side is pointwise of order $\sigma^{-N}$ and compactly supported, we see that $r_\sigma|_{t=T} = \mathcal O(\sigma^{-N})$ in $H^1(\R^n)$.

\subsection{Proof of Lemma \ref{lem_nonvanishing}}

If $\chi(0) \ne 0$ and $\eta(0) \ne 0$ then $a$ does not vanish anywhere on the line given by $t=x^1$.
For large enough $\sigma > 0$, the same is true for $e^{i \sigma \phi} A$ when restricted on $[0,T] \times \R^n$.

Let us now consider the case $n=1$.
The Sobolev embedding $H^1(\R) \subset C(\R)$, see e.g. \cite[Theorem 8.8]{Brezis}, implies that $r_\sigma|_{t=T} = \mathcal O(\sigma^{-N})$ in $C(\R)$.
Therefore $u(T,T) \ne 0$ for large $\sigma > 0$.
Lemma \ref{lem_nonvanishing} follows after a suitable translation in time. 

Lemma \ref{lem_nonvanishing_gen} can be shown in a similar manner, but higher regularity energy estimates, see e.g. \cite[Theorem 6, p. 412]{Evans1998}, are needed to bound $r_\sigma|_{t=T}$ in $H^k(\R^n)$ with $k$ large enough so that $H^k(\R^n) \subset C(\R^n)$.

\section{Solving an inverse problem using geometric optics}

We will consider the inverse problem to determine a compactly supported potential in a slab in the Minkowski space.
This avoids some technicalities appearing in more usual inverse boundary value problems, but it still allows us to introduce the main techniques. 

Let $n \ge 2$, $T > 0$, $q \in C_0^\infty((0,T) \times \R^n)$, and consider the wave equation 
\begin{align*}
\begin{cases}
\Box u + q u = 0 & \text{in $(0,T) \times \R^n$},
\\
u|_{t=0} = u_0,\ \p_t u|_{t=0} = u_1.
\end{cases}
\end{align*}
Define also the map 
$$
L : (C_0^\infty(\R^n))^2 \to C_0^\infty(\R^n),
\quad L (u_0, u_1) = u|_{t=T}.
$$
We will study how to solve the inverse problem to determine $q$ given $L$.

\subsection{Reduction to the light ray transform}
The function $u = e^{i \sigma \phi} A + r_\sigma$ constructed in Section \ref{sec_go}
solves (\ref{eq_wave_Min}) with 
$$
u_0 = (e^{i \sigma \phi} A)|_{t=0}, \quad 
u_1 = \p_t (e^{i \sigma \phi} A)|_{t=0}.
$$
As $q$ vanishes near $t=0$, we see that $u_0$ and $u_1$ are independent from $q$.
This again implies that $L$ determines $u|_{t=T}$. 
Moreover, 
$$
\sigma (e^{-i \sigma \phi} u - a_0)|_{t=T}
\to a_1|_{t=T} \quad \text{as $\sigma \to \infty$}.
$$
This determines $a_1|_{t=T}$. 
Recalling \eqref{def_aj} and using the fact that $\Box a_0$ does not depend on $q$, we find the integral 
$$
\int_{-r}^{T-r} q a_{0}(s',r,x') ds'.
$$
Here we used the fact that $t=T$ is equivalent to $s = T-r$. 

In $(s,r,x')$ coordinates $a_0 = \chi(2 r) \eta(x')$ and the above integral reduces at $(r,x')=0$ to
$$
\int_{0}^{T} q(s', 0, 0) ds' \chi(0) \eta(0).
$$
As $q$ vanishes for $t < 0$ and $t > T$, we can recover
in $(s,r,x')$ coordinates $\int_\R q(s,0,0) ds$, or equivalently, in $(t,x^1,x')$ coordinates 
$$
\int_\R q (\beta (s)) ds, \quad \beta(s) = (s,s,0).
$$
Repeating the above argument, after using rotations and translations, we obtain the light ray transform of $q$,
$$
\mathcal L q(y,v) = \int_\R q (\beta_{y,v} (s)) ds, \quad \beta_{y,v}(s) = (s,y + sv),\ y \in \R^n,\ v \in S^{n-1}.
$$

\subsection{Inversion of the light ray transform}

The above reduction works also when $n=1$, but inversion of $\mathcal L$ requires $n \ge 2$. Indeed, if $n=1$, then $\mathcal L q = 0$ for $q(s,r) = q_0(s) q_1(r)$ with $q_j$, $j=0,1$, integrating to zero. 

For a fixed $v \in S^{n-1}$, consider the change of coordinates in $\R^{1+n}$,
$$(t,x) = (s,y+sv).$$
Then $y = x - t v$.
Using this, we obtain the Fourier slicing 
$$
\int_{\R^n} e^{-i \eta \cdot y} \mathcal Lq(y,v)\, dy
= \int_{\R^{1+n}} e^{-i\eta \cdot (x-t v)} q(t,x)\, dtdx
= \widehat q(-\eta \cdot v, \eta).
$$
Here $|\eta \cdot v| \le |\eta|$.
Moreover, as $n \ge 2$, we can choose a unit vector $w$ that is orthogonal to $\eta$. 
Then for $a \in [-1,1]$ and $\eta \ne 0$, we may choose 
$$
v = - \frac a {|\eta|} \eta + \sqrt{1-a^2}\, w \in S^{n-1}.
$$ 
This gives $-\eta \cdot v = a |\eta|$,
and we see that the Fourier slicing allows us to recover 
$\widehat q(a |\eta|, \eta)$ for any $a \in [-1,1]$.

As $q$ is compactly supported, $\hat q$ is analytic. We know $\hat q$ in a non-empty open cone (in fact, in the cone of spacelike directions), and therefore everywhere by analytic continuation. This shows that $\mathcal L q$ determines $q$. By the above reduction also $L$ determines $q$.

\HOX{Unify the style for refs (maybe using bibtex), and sort them alphabetically}

\bibliographystyle{plain}
\bibliography{references}

\setlength{\parskip}{0pt}

\end{document}